\documentclass[12pt]{article}
\usepackage{type1cm}
\usepackage[T1]{fontenc}

\usepackage{amssymb,amsthm,amsmath}

\usepackage{latexsym}
\usepackage{makeidx}
\usepackage{showidx}
\usepackage[matrix,arrow,cmtip]{xypic}
\usepackage{colortbl}
\usepackage{fancybox}
\usepackage{fancyhdr}
\usepackage{wrapfig}


\newfont{\bb}{msbm10 at 12pt}
\newfont{\bbp}{msbm10 at 9pt}


\setlength{\textwidth}{15cm}
\setlength{\textheight}{20cm}
\setlength{\oddsidemargin}{0.5cm}
\setlength{\evensidemargin}{0.5cm}


%
%
\newtheorem{teorema}{Theorem}

\newtheorem{lema}{Lemma}
\newtheorem{corolario}{Corollary}
\newtheorem{definicion}{Definition}
\newtheorem{observacion}{Remark}

%

\def\r{\hbox{\bb R}}
\def\h{\hbox{\bb H}}

\def\fl{\longrightarrow}

\def\s{\hbox{\bb S}}

\newcommand{\campo}{\mathfrak{X}}

\newcommand{\set}[1]{\left\{#1\right\}}

\newcommand{\met}[2]{\langle #1,#2 \rangle }

\newcommand{\zb}{\bar{z}}

\newcommand{\Si}{\Sigma}

\begin{document}

\begin{center}
\rule{15cm}{1.5pt}\vspace{1cm}

{\LARGE \bf The Codazzi Equation for Surfaces}\\
\vspace{.5cm}

{\bf Juan A. Aledo$^a$, José M. Espinar${}^{b}$ and José A.
Gálvez${}^{c}$}\vspace{.5cm}

\rule{15cm}{1.5pt}
\end{center}

\noindent{\small ${}^a$Departamento de Matemáticas, Universidad de Castilla-La
Mancha, EPSA, 02071 Albacete, Spain; e-mail: JuanAngel.Aledo@uclm.es \\
${}^b$Institut de Math\'ematiques, Universit\'e
Paris VII, 175 Rue du Chevaleret, 75013 Paris, France;
e-mail: jespinar@ugr.es\\
${}^c$Departamento de Geometr\'\i
a y Topolog\'\i a, Facultad de Ciencias, Universidad de Granada, 18071 Granada, Spain;
e-mail: jagalvez@ugr.es}


\begin{abstract}
In this paper we develop an abstract theory for the Codazzi equation on surfaces, and use it as an analytic tool to derive new global results for surfaces in the space forms $\r^3$, $\s^3$ and $\h^3$. We give essentially sharp generalizations of some classical theorems of surface theory that mainly depend on the Codazzi equation, and we apply them to the study of Weingarten surfaces in space forms. In particular, we study existence of holomorphic quadratic differentials, uniqueness of immersed spheres in geometric problems, height estimates, and the geometry and uniqueness of complete or properly embedded Weingarten surfaces.
\end{abstract}

\section{Introduction}
The \emph{Codazzi equation} for an immersed surface $\Si$ in $\r^3$ yields
\begin{equation}\label{ecod}
\nabla_XSY-\nabla_YSX-S[X,Y]=0,\qquad X,Y\in\campo(\Si).
\end{equation}
Here $\nabla$ is the Levi-Civita connection of the first fundamental form $I$ of $\Si$ and $S$ is the shape operator, defined by $II(X,Y)=I(S(X),Y)$, where $II$ is the second fundamental form of the surface. This Codazzi equation is, together with the Gauss equation, one of the two classical integrability conditions for surfaces in $\r^3$, and it remains invariant if we substitute the ambient space $\r^3$ by other space form $\s^3$ or $\h^3$.

It is remarkable that some crucial results of surface theory in $\r^3$ only depend, in essence, of the Codazzi equation. This is the case, for instance, of Hopf's theorem (resp. Liebmann's theorem) on the uniqueness of round spheres among immersed constant mean curvature spheres (resp. among complete surfaces of constant positive curvature). This suggests the possibility of adapting these results to an abstract setting of \emph{Codazzi pairs} (i.e. pairs of real quadratic forms $(I,II)$ on a surface verifying \eqref{ecod}), and to explore its possible consequences in surface theory. The basic idea in this sense is to use the Codazzi pair $(I,II)$ as a geometric object in a \emph{non-standard} way, i.e. so that $(I,II)$ are no longer the first and second fundamental forms of a surface in a space form.

Motivated by this, our objective here is to develop an abstract theory for the Codazzi equation on surfaces, and use it subsequently as an analytic tool to derive new global results for surfaces in the space forms $\r^3$, $\s^3$ and $\h^3$.

Our results on Codazzi pairs here provide an extremely general extension of some classical theorems of surface theory that mainly depend on the Codazzi equation. But, moreover, this abstract approach has some very definite applications to the study of complete or properly embedded Weingarten surfaces in $\r^3$ or $\h^3$:

\begin{enumerate}
\item
It reveals the existence of holomorphic quadratic differentials for some classes of surfaces in space forms (and also in product spaces $\s^2\times \r$, $\h^2\times \r$, see \cite{AEG1}).
 \item
It unifies the proof of apparently non related theorems. For example, it shows that uniqueness in the Christoffel problem in $\r^3$ is basically equivalent to the Bonnet theorem on uniqueness of immersed spheres with prescribed mean curvature.
 \item
It gives an analytic tool to prove uniqueness results for complete or compact Weingarten surfaces in space forms.
\end{enumerate}
These applications show the flexibility of the use of Codazzi pairs in surface theory, and suggest the possibility of obtaining further global results with the techniques employed here.

We have organized this paper as follows. We shall start by reminding in Section \ref{s2} the definitions of fundamental pair, Codazzi pair, and some of their associated invariants such as the mean curvature $H$, the extrinsic curvature $K$ and the Hopf differential. We prove in Theorem \ref{elteorema} that a topological sphere $\Sigma$ endowed with a Codazzi pair satisfying a general Weingarten relationship $W(H,K)=0$ must be totally umbilical, if some necessary conditions are fulfilled by the functional $W$. This generalizes the previous Hopf theorem and Liebmann theorem.

This abstract treatment lets us apply
Theorem \ref{elteorema} to some seemingly unrelated situations. More specifically, as a consequence
of that result, we obtain generalizations of the Bonnet Theorem and
the theorem of uniqueness in the Christoffel problem. We point out that our proof
to the Christoffel problem is different from the classical approach, which uses integration theory on surfaces (see \cite{H,Sp}).

We will finish Section \ref{s2} proving that two Codazzi pairs $(I_i,II_i)$, $i = 1,2$, on a topological sphere $\Sigma$, such that $II_1=II_2$ and with the same positive extrinsic curvature must be isometric, that is, $I_1=I_2$. This result is a wide generalization of a classical result by Grove \cite{Gr} about rigidity of ovaloids
in $\r^3$.

In Section \ref{s3} we study when a real quadratic form $II$ on a Riemannian surface is conformal to the metric, even if the Codazzi equation is not satisfied. For that, we will define the Codazzi function on a surface associated to its induced metric $I$ and a real quadratic form $II$. This function will measure how far is the pair $(I,II)$ from satisfying the Codazzi equation.

We devote Section \ref{s4} to the fundamental relation between the Codazzi equation and the existence of holomorphic quadratic differentials. Thus, given a Codazzi pair on a surface $\Sigma$, we find, under certain conditions, the existence of a new Codazzi pair on $\Sigma$ whose Hopf differential is holomorphic. This new pair will provide geometric information about the initial one.

We particularize this result to the study of Codazzi pairs of special Weingarten type, that is, pairs satisfying $H=f(H^2-K)$ for a certain function $f$. The corresponding problem for surfaces in $\r^3$ and $\h^3$ was studied by Bryant in \cite{Br}. We will also prove that every Codazzi pair on a surface $\Sigma$ satisfying $H=f(H^2-K)$ can be recovered in terms of a metric on $\Sigma$ and a holomorphic quadratic form.

Finally, in Section \ref{s5}, we give some applications of our abstract approach to surfaces in space forms.
We begin by obtaining height estimates for a wide family of surfaces of elliptic type.
Although these estimates are not optimal, the existence of such height estimates with respect to planes constitute a fundamental tool for studying the behaviour of complete embedded surfaces.

Following the ideas developed by Rosenberg and Sa Earp in \cite{RoS}, we show that the theory developed by Korevaar, Kusner, Meeks and Solomon \cite{KKMS, KKS,M} for constant mean curvature surfaces in $\r^3$ and $\h^3$ remains valid for some families of surfaces satisfying the maximum principle (Theorem \ref{finales}).

In particular, when Theorem \ref{finales} is applied to a properly embedded Weingarten surface $\Sigma$ of elliptic type satisfying $H=f(H^2-K)$ in $\r^3$ or $\h^3$, we obtain: If $\Sigma$ has finite topology and $k$ ends, then $k\geq 2$, $\Sigma$ is rotational if $k=2$, and $\Sigma$ is contained in a slab if $k=3$.

To finish the paper, we study the problem of classifying Weingarten surfaces of elliptic type satisfying $H=f(H^2-K)$ in $\r^3$ such that $K$ does not change signs \cite{ST3}. We show that, in the above conditions, if $\Sigma$ is a complete surface with $K\geq0$ then it must be a totally umbilical sphere, a plane or a right circular cylinder, and if $\Sigma$ is properly embedded and $K\leq 0$, then it is a right circular cylinder or a surface of minimal type (i.e. $f(0)=0$).

\section{Fundamental pairs and Codazzi pairs}\label{s2}

Let us start this section by recalling some classical results about
fundamental pairs. A classical reference about this topic is
\cite{Mi}. Besides we point out that, although throughout this paper
we will assume that the differentiability used is always
$\cal{C}^{\infty}$, the differentiability requirements are much
lower.

We will denote by $\Si$ an orientable (and oriented) differentiable
surface. Otherwise we would work with its oriented two-sheeted
covering.

\begin{definicion}
A fundamental pair on $\Si$ is a pair of real quadratic forms
$(I,II)$ on $\Si$, where $I$ is a Riemannian metric.
\end{definicion}

Associated to a fundamental pair $(I,II)$ we define the shape
operator $S$ of the pair as
\begin{equation}\label{ii}
 II(X,Y)=I(S(X),Y)
\end{equation}
for any vector fields $X,Y$ on $\Si$.

Conversely, from (\ref{ii}) it becomes clear that the quadratic form
$II$ is totally determined by $I$ and $S$. In other words, giving a
fundamental pair on $\Si$ is equivalent to giving a Riemannian
metric on $\Si$ and a self-adjoint endomorphism $S$.

We define the {\it mean curvature}, the {\it extrinsic curvature}
and the {\it principal curvatures} of $(I,II)$ as half the trace,
the determinant and the eigenvalues of the endomorphism $S$,
respectively.

In particular, given local parameters $(x,y)$ on $\Si$ such that
$$
I=E\,dx^2+2F\,dxdy+G\,dy^2,\qquad II=e\,dx^2+2f\,dxdy+g\,dy^2,
$$
the mean curvature and the extrinsic curvature of the pair are
given, respectively, by
$$
H=H(I,II)=\frac{E g+G e-2F f}{2(EG-F^2)},\qquad K=K(I,II)=\frac{eg-f^2}{EG-F^2}.
$$
Moreover, the principal curvatures of the pair are
$H\pm\sqrt{H^2-K}$.

We will say that the pair $(I,II)$ is {\it umbilical} at $p\in \Si$
if $II$ is proportional to $I$ at $p$, or equivalently:
\begin{itemize}
\item if both principal curvatures coincide at $p$, or
\item if $S$ is proportional to the identity map on the tangent plane at $p$, or
\item if $H^2-K=0$ at $p$.
\end{itemize}

We define the {\it Hopf differential} of the fundamental pair
$(I,II)$ as the (2,0) part of $II$ for the Riemannian metric $I$. In
other words, if we consider $\Sigma$ as a Riemann surface with
respect to the metric $I$ and take $z$ a local conformal parameter,
then
\begin{equation}\label{parholomorfo}
\begin{array}
{c} I=2\lambda\,|dz|^2\\[2mm]
II=Q\,dz^2+2\lambda\,H\,|dz|^2+\overline{Q}\,d\zb^2.
\end{array}
\end{equation}
The quadratic form $Q\,dz^2$, which does not depend on the chosen
parameter, is known as the Hopf differential of the pair $(I,II)$.
We note that $(I,II)$ is umbilical at $p\in\Sigma$ if, and only if,
$Q(p)=0$.

All the above definitions can be understood as natural extensions of
the corresponding ones for isometric immersions of a Riemann surface
in a 3-dimensional ambient space, where $I$ plays the role of the
induced metric and $II$ the role of its second fundamental form.

A specially interesting case happens when the fundamental pair
satisfies, in an abstract way, the Codazzi equation for surfaces in
$\r^3$,

\begin{definicion}
We say that a fundamental pair $(I,II)$, with shape operator $S$, is a
Codazzi pair if
\begin{equation}\label{ecuacioncodazzi}
\nabla_XSY-\nabla_YSX-S[X,Y]=0,\qquad X,Y\in\campo(\Si),
\end{equation}
where $\nabla$ stands for the Levi-Civita connection associated to
the Riemannian metric $I$ and $\campo (\Si)$ is the set of
differentiable vector fields on $\Si$.
\end{definicion}

Many Codazzi pairs appear in a natural way in the study of surfaces.
For instance, the first and second fundamental forms of a surface
isometrically immersed in a 3-dimensional space form is a Codazzi
pair. The same occurs for spacelike surfaces in a 3-dimensional
Lorentzian space form. More generally, if the surface is immersed
in an $n$-dimensional (semi-Riemannian) space form and has a
parallel unit normal vector field $N$, then its induced metric and
its second fundamental form associated to $N$ make up a Codazzi
pair.

Classically, Codazzi pairs also arise in the study of harmonic maps.
Many others examples of Codazzi pairs also appear in
\cite{AEG1,Bi,Mi,Ol}. All of this shows that the results that we
present in this work can be used in many different contexts.

Many classical results in surface theory depend on the Codazzi
equation of the immersion. This fact allows to generalize such
results to the Codazzi pairs theory. Some examples of that, are
Hopf's results proving that the only surfaces immersed in $\r^3$
with constant mean curvature are totally umbilical. Analogously,
Liebmann proved that the only complete surfaces with positive constant
Gaussian curvature in $\r^3$ are totally umbilical spheres. Now, we obtain a generalization of both results to the wider family of Weingarten pairs,
which we define next
\begin{definicion}
We say that a fundamental pair $(I,II)$ on a surface $\Si$ is a
Weingarten pair if its mean and extrinsic curvatures, $H$ and $K$
respectively, satisfy a non trivial relationship
$$W(H,K)=0,$$
where $W$ is a differentiable function defined on an open set of
$\r^2$ containing the set of points $\{(H(p),K(p)):\ p\in\Si\}$.
\end{definicion}

\begin{teorema}\label{elteorema}
Let $(I,II)$ be a Codazzi pair on a surface $\Si$. If $(I,II)$ is a
Weingarten pair for a functional $W(x,y)$ such that
\begin{equation}
\label{hipotesis} W_x(t,t^2)+2t\,W_y(t,t^2)\neq0\qquad\mbox{for all
} t,
\end{equation}
then either the umbilical points of $(I,II)$ are isolated and of
negative index, or the pair is totally umbilical.

In particular, if $\Si$ is a topological sphere then $(I,II)$ is
totally umbilical.
\end{teorema}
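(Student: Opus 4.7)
The plan is to carry out Hopf's classical scheme in the abstract Codazzi setting: show that the Hopf differential $Q\,dz^2$ is ``pseudo-holomorphic'' near every umbilical point, and then close with a topological index argument.

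First I would pick a local conformal parameter $z$ for $I$ and, as in \eqref{parholomorfo}, write $I = 2\lambda\,|dz|^2$ and $II = Q\,dz^2 + 2\lambda H\,|dz|^2 + \overline{Q}\,d\zb^2$, so that the umbilical points of $(I,II)$ are exactly the zeros of $Q$ and $u:=|Q|^2/\lambda^2 = H^2-K$. A direct calculation transforms the Codazzi equation \eqref{ecuacioncodazzi} into the single equation $Q_{\zb}=\lambda\,H_z$.

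The core step is to derive, around any umbilical point $p_0$ with $H(p_0)=t_0$, a Cauchy--Riemann-type estimate $|Q_{\zb}|\leq C\,|Q|$. The Weingarten relation reads $W(H,H^2-u)=0$, and its derivative with respect to $H$ at $(t_0,0)$ is exactly $W_x(t_0,t_0^2)+2t_0\,W_y(t_0,t_0^2)$, nonzero by \eqref{hipotesis}. The implicit function theorem then produces a smooth $\Phi$ with $H=\Phi(u)$ on a neighborhood of $p_0$, and hence $H_z = \Phi'(u)\,u_z$. A direct computation of $u_z = (Q\,\overline{Q}/\lambda^2)_z$ yields $|u_z|\leq C_1\,|Q|\,(|Q_z|+|Q_{\zb}|+|Q|)$ on a compact neighborhood. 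Combining this with $Q_{\zb}=\lambda\,H_z$ gives $|Q_{\zb}|\leq C_2\,|Q|\,(|Q_z|+|Q_{\zb}|+|Q|)$, and shrinking the neighborhood so that $C_2\,|Q|<1/2$ lets us absorb the $|Q_{\zb}|$ on the right-hand side and obtain $|Q_{\zb}|\leq C\,|Q|$.

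With this inequality in hand, the Bers--Vekua similarity principle applied to $Q$ gives the stated dichotomy: either $Q$ vanishes identically on a neighborhood of $p_0$, in which case analytic continuation (or the same argument run at every other umbilical point, together with connectedness of $\Si$) forces $(I,II)$ to be totally umbilical, or $p_0$ is an isolated zero of $Q$ of some finite order $n\geq 1$, at which the line field defined by $Q\,dz^2$ has index $-n/2<0$. If $\Si$ is a topological sphere and $(I,II)$ is not totally umbilical, the sum of the indices of this line field over all its singularities must equal $\chi(\Si)=2>0$, which is incompatible with every index being strictly negative; hence $(I,II)$ is totally umbilical in this case.

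The main obstacle is the derivation of the pseudo-holomorphic inequality $|Q_{\zb}|\leq C\,|Q|$, which is precisely where \eqref{hipotesis} is used in an essential way. Some care is needed because $u_z$ involves $\overline{Q_{\zb}}$, so $|Q_{\zb}|$ reappears on the right-hand side of the estimate and can only be absorbed by restricting to a small enough neighborhood of the umbilical point. The remaining ingredients—the conformal form of the Codazzi equation, the similarity principle, and the Poincar\'e--Hopf count for line fields on closed oriented surfaces—are classical.
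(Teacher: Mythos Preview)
Your proposal is correct and follows essentially the same scheme as the paper: derive $Q_{\bar z}=\lambda H_z$ from the Codazzi equation, use the Weingarten relation together with \eqref{hipotesis} to obtain a Cauchy--Riemann inequality $|Q_{\bar z}|\le h\,|Q|$ near each umbilic, then invoke the similarity principle (the paper cites the Main Lemma of Alencar--do Carmo--Tribuzy and Jost) and the Poincar\'e index theorem. The only difference is organizational---the paper differentiates $W(H,K)=0$ directly and solves algebraically for $Q_{\bar z}$ rather than passing through the implicit function theorem---and the absorption of the stray $|Q_{\bar z}|$ on the right, which you make explicit, is present but left implicit in the paper's argument.
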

Several proofs of this result when the ambient space is $\r^3$ or
$\h^3$ have been given by Hopf \cite{Ho}, Chern \cite{Ch}, Hartman
and Wintner \cite{HW}, Bryant \cite{Br} or Alencar, do Carmo and
Tribuzy \cite{AdCT}.
\begin{proof}
Let us consider $\Si$ as a Riemann surface with the conformal
structure induced by $I$. Given a local conformal parameter $z$, we
can write the fundamental pair $(I,II)$ as in (\ref{parholomorfo}).
Hence we have
\begin{equation}\label{levicivita}
\nabla_{\frac{\partial\
}{\partial z}}\frac{\partial\ }{\partial z}=\frac{\lambda_z}{\lambda}\
\frac{\partial\ }{\partial z},\qquad \nabla_{\frac{\partial\ }{\partial
z}}\frac{\partial\ }{\partial \zb}=0
\end{equation}
and the shape operator $S$ becomes
\begin{equation}\label{endomorfismoweingarten}
S\frac{\partial\ }{\partial z}=H\,\frac{\partial\ }{\partial z}+\frac{Q}{\lambda}\
\frac{\partial\ }{\partial \zb}.
\end{equation}
Consequently, if we take $X=\frac{\partial\ }{\partial z}$ and
$Y=\frac{\partial\ }{\partial \zb}$ in the Codazzi equation
(\ref{ecuacioncodazzi}) we get
\begin{equation}
\label{qzbarra}
Q_{\zb}=\lambda\,H_z.
\end{equation}
In addition, from (\ref{endomorfismoweingarten}) we obtain that the
extrinsic curvature is given by
\begin{equation}\label{modulodeq}
K=H^2-\frac{|Q|^2}{\lambda^2}.
\end{equation}
Thus, differentiating the equality $W(H,K)=0$ with respect to $z$
\begin{eqnarray*}
0&=&H_{z}\,W_x(H,K)+K_{z}\,W_y(H,K)\\
&=&H_{z}\,W_x(H,K)+\left(2H\,H_{z}-|Q|^2\left(\frac{1}{\lambda^2}\right)_{z}-
\frac{Q_{z}\overline{Q}+Q\,\overline{Q}_{z}}{\lambda^2}\right)\,W_y(H,K),
\end{eqnarray*}
and using (\ref{qzbarra})
$$
(W_x(H,K)+2H\,W_y(H,K))\,Q_{\zb}=\lambda\,W_y(H,K)\,\left(|Q|^2\left(\frac{1}{\lambda^2}\right)_{z}+
\frac{Q_{z}\overline{Q}+Q\,\overline{Q}_{z}}{\lambda^2}\right).
$$

Therefore, from (\ref{hipotesis}), if $p\in\Si$ is an umbilical
point (i.e. $Q(p)=0$ or equivalently $H^2=K$), there exists a
continuous function $h$ in a neighborhood $U$ of $p$ such that
$|Q_{\zb}|\leq h\,|Q|$ on $U$.

Hence, from \cite[Main Lemma]{AdCT} (see also \cite[Lemma 2.7.1]{Jost}), either $Q$ vanishes identically
on $U$ or $p$ is an isolated zero of negative index of $Q$.

In particular, if $\Si$ is a topological sphere, from the
Poincaré index Theorem we get that the Hopf differential
$Q\,dz^2$ must vanish identically on $\Si$, as we wanted to prove.
\end{proof}

\begin{observacion}
The above result can be globally used not only for topological
spheres. Indeed, if $\Si$ is a topological torus under the
assumptions of Theorem \ref{elteorema}, then we deduce that the pair
$(I,II)$ is either totally umbilical or umbilically free.
Analogously, if $\Si$ is a closed topological disk and its boundary
$\partial\Si$ is a line of curvature for $(I,II)$, then the pair is
totally umbilical.
\end{observacion}

It is well-known that the hypothesis (\ref{hipotesis}) cannot be
removed. Examples of this are the non totally umbilical rotational
spheres in any space form, since every rotational sphere is a
Weingarten surface.

The abstract use of Codazzi pairs allows us to see some classical
results, apparently non related, as immediate
consequences of Theorem \ref{elteorema}. Two good examples are
Bonnet Theorem and the uniqueness to the Christoffel problem in
$\r^3$, as we show next

\begin{corolario}\label{bonnet}
{\bf (Abstract Bonnet Theorem)} Let $\Si$ be a topological sphere
and $(I,II_1)$, $(I,II_2)$ two Codazzi pairs with the same Riemanian
metric $I$. If both pairs have the same mean curvature, then
$II_1=II_2$.
\end{corolario}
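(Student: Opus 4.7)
The plan is to reduce this to Theorem \ref{elteorema} by forming the difference pair $\widehat{II}:=II_1-II_2$ and showing it is a trivial Codazzi pair. The key observation is that the Codazzi equation \eqref{ecuacioncodazzi} is \emph{linear} in the shape operator, so differences of Codazzi pairs on the same metric $I$ remain Codazzi pairs.

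More precisely, first I would note that since $II_i(X,Y)=I(S_iX,Y)$ for $i=1,2$, the shape operator associated to the fundamental pair $(I,\widehat{II})$ is exactly $\widehat{S}=S_1-S_2$. Applying $\nabla_XS_iY-\nabla_YS_iX-S_i[X,Y]=0$ for $i=1,2$ and subtracting shows $(I,\widehat{II})$ is again a Codazzi pair. Next, because the mean curvature of a pair is half the trace of its shape operator, we have
\[
\widehat{H}:=H(I,\widehat{II})=\tfrac{1}{2}\operatorname{tr}(S_1-S_2)=H_1-H_2=0
\]
by hypothesis.

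Now I would apply Theorem \ref{elteorema} to $(I,\widehat{II})$ using the Weingarten functional $W(x,y)=x$, which clearly satisfies $W(\widehat{H},\widehat{K})=\widehat{H}=0$. The hypothesis \eqref{hipotesis} is trivially met since
\[
W_x(t,t^2)+2t\,W_y(t,t^2)=1\neq 0\qquad\text{for all }t.
\]
Since $\Sigma$ is a topological sphere, Theorem \ref{elteorema} forces $(I,\widehat{II})$ to be totally umbilical, i.e.\ $\widehat{II}=\mu\,I$ for some function $\mu$ on $\Sigma$. Taking mean curvatures gives $\mu=\widehat{H}=0$, hence $\widehat{II}\equiv 0$ and $II_1=II_2$.

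The only step that requires any care is verifying that $(I,\widehat{II})$ is truly a Codazzi pair; once the linearity of \eqref{ecuacioncodazzi} in $S$ is recognized this is immediate, so there is essentially no obstacle — the whole content of the corollary is packaged into the abstract Hopf-type statement of Theorem \ref{elteorema}, and this is precisely what the authors wish to emphasize.
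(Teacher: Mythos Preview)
Your proof is correct and follows essentially the same approach as the paper: form the difference pair $(I,II_1-II_2)$, observe by linearity of \eqref{ecuacioncodazzi} that it is again a Codazzi pair with mean curvature zero, and apply Theorem \ref{elteorema} with the functional $W(x,y)=x$ to conclude that its Hopf differential vanishes. Your write-up is in fact slightly more explicit than the paper's in checking the ellipticity hypothesis \eqref{hipotesis} and in passing from ``totally umbilical'' to $\widehat{II}=0$.
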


In $\r^3$, this result says that two isometric immersions from a
Riemannian sphere in $\r^3$ with the same mean curvature must
coincide, up to an isometry of the ambient space.

\begin{proof}
Since $(I,II_1)$ and $(I,II_2)$ are Codazzi pairs, so is the new
pair $(I,II_1-II_2)$. Besides, as $H(I,II_1)=H(I,II_2)$ the mean
curvature of $(I,II_1-II_2)$ vanishes identically. In particular, by
taking a local conformal parameter $z$, we can put (see
(\ref{parholomorfo}))
$$
I=2\lambda\,|dz|^2,\qquad II_1-II_2=Q\,dz^2+\overline{Q}\,d\zb^2.
$$
Thus, using Theorem \ref{elteorema} for the pair $(I,II_1-II_2)$ and
the functional $W(H,K)=H=0$, we get that $Q\equiv 0$, which finishes the proof.
\end{proof}

For a fundamental pair $(I,II)$ with mean and extrinsic curvatures
$H$ and $K$ respectively, the \emph{third fundamental form} is given
by $III=-K\,I+2H\,II$ (see, for instance, \cite{Mi}). In particular,
given a surface isometrically immersed in a 3-dimensional manifold
with first and second fundamental forms $I$ and $II$ respectively,
$III$ is nothing but its classical third fundamental form. In other
words, $III=\met{dN}{dN}$ where $N$ is a unit normal vector field on
the surface and $\met{}{}$ is the metric of the ambient space.

\begin{corolario} \label{chris} Let $\Si$ be a topological sphere and $(I_i,II_i)$, $i=1,2$, two Codazzi pairs
with mean curvature $H_i$ and extrinsic curvature $K_i$. If both
pairs have the same third fundamental form with $K_i(p)\neq 0$ for
all $p\in\Si$ and $\frac{H_1}{K_1}=\frac{H_2}{K_2}$, then
$(I_1,II_1)=(I_2,II_2)$.
\end{corolario}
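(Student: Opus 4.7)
The plan is to reduce to the Abstract Bonnet Theorem (Corollary \ref{bonnet}) by working with the reciprocal pairs $(III, II_i)$ rather than with the pairs $(I_i, II_i)$ themselves. Writing $S_i$ for the shape operator of $(I_i, II_i)$ and applying Cayley--Hamilton ($S_i^2 - 2H_i S_i + K_i\,\mathrm{Id} = 0$), the identity $III = -K_i I_i + 2H_i II_i$ rewrites as $III(X,Y) = I_i(S_i^2 X, Y)$. Because $K_i \neq 0$, the operator $S_i$ is invertible, and hence $II_i(X,Y) = III(S_i^{-1} X, Y)$. This means that $S_i^{-1}$ is the shape operator of the pair $(III, II_i)$ with respect to $III$; its mean curvature is $\tfrac{1}{2}\mathrm{tr}(S_i^{-1}) = H_i/K_i = h$, the same for both $i$, and its extrinsic curvature is $\det(S_i^{-1}) = 1/K_i$. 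Note that $K_i\neq 0$ also guarantees that $III$ is Riemannian, so $(III,II_i)$ is indeed a fundamental pair.

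The main obstacle is showing that this reciprocal pair $(III, II_i)$ is itself a Codazzi pair, which is required before one can feed it into Corollary \ref{bonnet}. This is a classical property of abstract reciprocal pairs whose verification requires comparing the Levi-Civita connection of $III$ with that of $I_i$. I would either cite it from Milnor \cite{Mi}, or, if a self-contained argument is needed, verify it locally: in a conformal parameter $z$ for $I_i$, write out $III$ and $II_i$ using \eqref{parholomorfo} and \eqref{modulodeq}, and deduce the Codazzi relation for the new pair from \eqref{qzbarra} by a direct computation in the complex coordinate.

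Granted this, $(III, II_1)$ and $(III, II_2)$ become two Codazzi pairs on the topological sphere $\Si$ sharing the Riemannian metric $III$ and the mean curvature $h$. Corollary \ref{bonnet} then yields $II_1 = II_2 =: II$. The pair $(III, II)$ now has a single, well-defined extrinsic curvature, which must equal $1/K_i$ for each $i$, forcing $K_1 = K_2$ and hence $H_1 = h K_1 = h K_2 = H_2$. Finally, solving $III = -K_i I_i + 2H_i II$ for $I_i$ gives
$$I_i = 2h\,II - \frac{1}{K_i}III,$$
a formula independent of $i$, so $I_1 = I_2$ and the proof is complete.
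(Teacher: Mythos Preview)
Your proposal is correct and follows essentially the same approach as the paper: reduce to the Abstract Bonnet Theorem via the reciprocal pair $(III,II_i)$, citing \cite{Mi} for the fact that this is again a Codazzi pair with mean curvature $H_i/K_i$ and extrinsic curvature $1/K_i$, conclude $II_1=II_2$, then recover $K_1=K_2$ and finally $I_1=I_2$ from $I_i=-\tfrac{1}{K_i}III+2\tfrac{H_i}{K_i}II_i$. Your write-up is in fact slightly more explicit than the paper's in justifying that $III$ is Riemannian and that $S_i^{-1}$ is the shape operator of $(III,II_i)$.
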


When we particularize this result to the case of two isometric
immersions from a Riemannian sphere in $\r^3$ satisfying the
assumptions above, we get an easy proof of the uniqueness to the
Christoffel problem. Besides this proof is original in the sense
that the classical approaches to this problem use integration theory
on surfaces (see \cite{H,Sp}).

\begin{proof}
It is known \cite{Mi} that if $(I_i,II_i)$ is a Codazzi pair
with non vanishing extrinsic curvature, then $(III_i,II_i)$ is also
a Codazzi pair with mean curvature $\frac{H_i}{K_i}$. Consequently,
from Corollary \ref{bonnet} we deduce that $II_1=II_2$.

Thus, since $K(III_i,II_i)=\frac{1}{K_i}$ we have that $K_1=K_2$.
Finally, using that
$I_i=-\frac{1}{K_i}\,III_i+2\frac{H_i}{K_i}\,II_i$, it follows that
$I_1=I_2$.
\end{proof}

We observe that the previous proof is based in the simple fact that $(III,II)$ is a Codazzi pair. That is, the Bonnet theorem in $\r^3$ and the theorem of uniqueness of the Christoffel problem are a direct consequence of the Abstract Bonnet Theorem, when it is applied to the Codazzi pair $(I,II)$ or the Codazzi pair $(III,II)$, respectively.

In \cite{Gr} Grove proved that two ovaloids in $\r^3$ with the same
second fundamental form and extrinsic curvature are congruent. We give a different proof, generalizing that result to Codazzi pairs. The original proof by Grove involves techniques from integration theory on
surfaces.

\begin{teorema}\label{gengr}
Let $\Si$ be a topological sphere and $(I_i,II)$, $i=1,2$, two
Codazzi pairs on $\Sigma$ with the same extrinsic curvature $K>0$.
Then $I_1=I_2$.
\end{teorema}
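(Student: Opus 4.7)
The plan is to reduce the theorem to the Main Lemma of Alencar--do~Carmo--Tribuzy (the analytic engine behind Theorem~\ref{elteorema}) by working in the conformal structure induced by $II$. Since $K>0$, after choosing an appropriate orientation we may regard $II$ as a Riemannian metric on $\Si$. In a local conformal parameter $w$ for $II$, so that $II=2\mu\,|dw|^2$, each metric takes the form
\[
I_i = a_i\,dw^2 + 2b_i\,|dw|^2 + \bar a_i\,d\bar w^2,\qquad i=1,2,
\]
with $b_i>0$. The hypothesis $K(I_i,II)=K$ translates to the algebraic constraint $b_i^2-|a_i|^2=\mu^2/K$, which determines $b_i$ from $|a_i|$, so $I_1=I_2$ is equivalent to $\alpha:=a_1-a_2\equiv 0$ on $\Si$.

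The heart of the argument is to derive a Cauchy--Riemann type inequality for $\alpha$ from the two Codazzi equations. Computing the Christoffel symbols of $I_i$ in the $w$-parameter (which is not conformal for $I_i$) and substituting into the Codazzi identity $\mu_w/\mu=\Gamma^{w}_{ww}(I_i)-\Gamma^{\bar w}_{w\bar w}(I_i)$, one obtains, after using the constraint to replace $b_i(b_i)_w$, the compact identity
\[
b_i\,(a_i)_{\bar w} - a_i\,\overline{(a_i)_{\bar w}} = -\frac{\mu^2\,K_w}{2K^2},
\]
whose right-hand side depends only on the common data $(\mu,K)$. Subtracting this identity for $i=1,2$ gives
\[
b_1\,(a_1)_{\bar w} - b_2\,(a_2)_{\bar w} = a_1\,\overline{(a_1)_{\bar w}} - a_2\,\overline{(a_2)_{\bar w}},
\]
which rewritten in terms of $\alpha$ and $\beta:=b_1-b_2$ reads $b_1\,\alpha_{\bar w}-a_1\,\overline{\alpha_{\bar w}} = \alpha\,\overline{(a_2)_{\bar w}}-\beta\,(a_2)_{\bar w}$. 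Since $b_1>|a_1|$, the linear map $v\mapsto b_1v-a_1\bar v$ satisfies $|b_1v-a_1\bar v|\ge(b_1-|a_1|)|v|=\mu^2|v|/(K(b_1+|a_1|))$; combined with $|\beta|\le|\alpha|\,(|a_1|+|a_2|)/(b_1+b_2)$ (again from the constraint), this yields the desired estimate $|\alpha_{\bar w}|\le h\,|\alpha|$ for a continuous function $h$ on the compact sphere $\Si$.

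The conclusion then follows exactly as in the proof of Theorem~\ref{elteorema}: by the Main Lemma of \cite{AdCT}, the quadratic differential $\alpha\,dw^2$ on the Riemann surface $(\Si,II)$ either vanishes identically or has only isolated zeros of negative index, and the Poincar\'e index theorem rules out the second alternative on a topological sphere. Hence $\alpha\equiv 0$, the constraint forces $b_1=b_2$, and $I_1=I_2$.

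The main obstacle is the algebraic/PDE computation of the second paragraph: the Codazzi equation is cleanest in an $I_i$-conformal parameter, but here it must be carried out in the ``foreign'' parameter $w$ of $II$, requiring the full Christoffel symbols of $I_i$. The decisive point that makes the subtraction collapse is that the common extrinsic-curvature condition $b_i^2-|a_i|^2=\mu^2/K$ produces a reduced Codazzi equation whose right-hand side depends only on $\mu$ and $K$, and hence cancels upon subtraction, leaving the Cauchy--Riemann inequality for $\alpha$.
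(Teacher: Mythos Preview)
Your proof is correct and follows essentially the same approach as the paper's: both work in a conformal parameter for $II$, derive from the Codazzi equation an identity for the $(2,0)$-part $a_i$ of $I_i$ whose right-hand side depends only on the common data $(\mu,K)$, subtract, bound $|\beta|$ by $|\alpha|$ via the constraint, and invoke the Main Lemma of \cite{AdCT} together with the Poincar\'e index theorem. The only cosmetic difference is that the paper solves the identity explicitly for $(a_i)_{\bar w}$ (obtaining $(a_i)_{\bar w}=-\tfrac{1}{2K}(b_iK_w+a_iK_{\bar w})$) before subtracting, which yields the estimate $|\alpha_{\bar w}|\le\tfrac{|K_w|+|K_{\bar w}|}{2K}\,|\alpha|$ a bit more directly than your inversion of the map $v\mapsto b_1v-a_1\bar v$.
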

\begin{proof}
Since $K>0$, we can assume (changing $II$ by $-II$ if necessary)
that $II$ is a Riemannian metric on $\Si$. Taking a local isothermal
parameter $z$ for $II$, we can write
\begin{eqnarray*}
&I_i=P_i\,dz^2+2\lambda_i\,|dz|^2+\overline{P_i}\,d\zb^2&\\
&II=2\rho\,|dz|^2,
\end{eqnarray*}
with $\rho>0$.

Hence, the mean and extrinsic curvatures of the pair $(I_i,II)$,
$i=1,2$ can be written as
\begin{equation}
H_i=\frac{\lambda_i\,\rho}{\lambda_i^2-|P_i|^2} \label{HK},\qquad
K=\frac{\rho^2}{\lambda_i^2-|P_i|^2},
\end{equation}
and the shape operator is given by
$$
S_i\frac{\partial\ }{\partial z}=\frac{K}{\rho}\,\left(\lambda_i\,\frac{\partial\
}{\partial z}-P_i\,\frac{\partial\ }{\partial \zb}\right).
$$
Let us denote by $\nabla^i$ the Levi-Civita connection associated to
the metric $I_i$ and put
$$
\nabla^i_{\frac{\partial\ }{\partial z}}\frac{\partial\ }{\partial
z}=\Gamma_{11}^{1,i}\,\frac{\partial\ }{\partial z}+
\Gamma_{11}^{2,i}\,\frac{\partial\ }{\partial \zb},\qquad \nabla^i_{\frac{\partial\
}{\partial z}}\frac{\partial\ }{\partial \zb}=\Gamma_{12}^{1,i}\,\frac{\partial\
}{\partial z}+ \overline{\Gamma_{12}^{1,i}}\,\frac{\partial\ }{\partial \zb}.
$$
Since $(I_i,II)$ is a Codazzi pair we have
\begin{eqnarray*}
0&=&I_i\left(\nabla^i_{\frac{\partial\ }{\partial
z}}S_i{\frac{\partial\ }{\partial \zb}},{\frac{\partial\ }{\partial
z}}\right)-I_i\left(\nabla^i_{\frac{\partial\ }{\partial
\zb}}S_i{\frac{\partial\ }{\partial z}},{\frac{\partial\ }{\partial z}}\right)\\
&=&\rho_z-I_i\left(S_i{\frac{\partial\ }{\partial
\zb}},\nabla^i_{\frac{\partial\ }{\partial z}}\frac{\partial\
}{\partial z}\right)+I_i\left(S_i{\frac{\partial\ }{\partial
z}},\nabla^i_{\frac{\partial\ }{\partial z}}\frac{\partial\
}{\partial \zb}\right)
\end{eqnarray*}
and so
\begin{equation}
\label{formula1}
\rho_z=\rho\left(\Gamma_{11}^{1,i}-\overline{\Gamma_{12}^{1,i}}\right).
\end{equation}

On the other hand, a direct calculation gives
$$
\left(\lambda_i^2-|P_i|^2\right)_z=2\,\left(\lambda_i^2-|P_i|^2\right)\,\left(\Gamma_{11}^{1,i}+
\overline{\Gamma_{12}^{1,i}}\right).
$$

With all of this, we obtain from (\ref{HK}) by differentiating $K$
\begin{equation}
\label{formula2}
\frac{K_z}{K}=2\frac{\rho_z}{\rho}-2\left(\Gamma_{11}^{1,i}+
\overline{\Gamma_{12}^{1,i}}\right).
\end{equation}
Therefore, from (\ref{formula1}) and (\ref{formula2}),
$$
\frac{K_z}{K}=-4\,\overline{\Gamma_{12}^{1,i}}=\frac{2K}{\rho^2}\left(P_i\,\overline{P_i}_z-\lambda_i\,{P_i}_{\zb}\right),
$$
or equivalently,
\begin{equation}\label{f1}
{P_i}_{\zb}=-\frac{1}{2K}\left(\lambda_i\,K_z+P_i\,K_{\zb}\right).
\end{equation}

Moreover, from (\ref{HK}) we get
\begin{eqnarray}
|\lambda_1-\lambda_2|&=&\frac{\rho}{K}\,|H_1-H_2|=\frac{\rho}{K}\,\sqrt{(H_1-H_2)^2} \label{f2} \\
&\leq& \frac{\rho}{K}\,\left|\sqrt{H_1^2-K}-\sqrt{H_2^2-K}\right|=
\left||P_1|-|P_2|\right| \leq |P_1-P_2|. \nonumber
\end{eqnarray}

Now, we can derive from (\ref{f1}) and (\ref{f2}) that
$$
|{P_1}_{\zb}-{P_2}_{\zb}|\leq\frac{|K_z|+|K_{\zb}|}{2K}\,|P_1-P_2|.
$$
Finally, using \cite[Main Lemma]{AdCT} or \cite[Lemma 2.7.1]{Jost}, we conclude that the
quadratic form $(P_1-P_2)dz^2$ vanishes identically on the
topological sphere $\Si$. Thus, $P_1\equiv P_2$ and, from (\ref{HK}), we get $\lambda_1=\lambda_2$. Or equivalently, $I_1=I_2$.
\end{proof}

\section{The Codazzi function}\label{s3}

Under certain natural conditions, it is possible to obtain important
consequences about a surface endowed with a fundamental pair
although the Codazzi equation is not satisfied. In order to study
these conditions, next we define the Codazzi tensor and the Codazzi
function, which will play an essential role in our study.

\begin{definicion}\label{d4}
Given a fundamental pair $(I,II)$ on a surface $\Si$ with associated
shape operator $S$, we will call Codazzi tensor of $(I,II)$ to the
map $T_S:\campo (\Si)\times \campo (\Si)\fl \campo (\Si)$ defined by
$$
T_S(X,Y)=\nabla_XSY-\nabla_YSX-S[X,Y],\qquad X,Y\in \campo (\Si).
$$
\end{definicion}

Although the definition above has been made in an abstract context,
the Codazzi tensor appears naturally in the study of isometric
immersions of surfaces. To be more precise, the Codazzi equation of
a surface isometrically immersed in a 3-dimensional manifold $M^3$
is
$$
\nabla_XSY-\nabla_YSX-S[X,Y]=-\overline{R}(X,Y)N,\qquad X,Y\in\campo
(\Si),
$$
where $N$ is the unit normal vector field of the immersion, $S$ the associated
shape operator and $\overline{R}$ the curvature tensor of $M^3$
$$
\overline{R}(X,Y)Z=\overline{\nabla}_X\overline{\nabla}_YZ-
\overline{\nabla}_Y\overline{\nabla}_XZ-\overline{\nabla}_{[X,Y]}Z,
$$
$\overline{\nabla}$ being the Levi-Civita connection of $M^3$.

A straightforward computation shows that the Codazzi tensor of a
fundamental pair on a surface satisfies the following properties:

\begin{lema}\label{l1}
Let $(I,II)$ be a fundamental pair on a surface $\Si$ with
associated shape operator $S$ and Codazzi tensor $T_S$. Then
\begin{enumerate}
\item $T_S$ is skew-symmetric, i.e. $T_S(X,Y)=-T_S(Y,X)$ for all $X,Y\in \campo
(\Si)$.
\item $T_S$ is ${\cal C}^{\infty}(\Sigma)$-bilineal, that is,
$$
T_S(f_1\,X_1+f_2\,X_2,Y)=f_1\,T_S(X_1,Y)+f_2\,T_S(X_2,Y)
$$
for all vector fields $X_1,X_2,Y\in \campo (\Si)$ and differentiable real functions $f_1,f_2$.
\item Moreover, given vector fields $X,Y\in \campo (\Si)$ and a differentiable real function $f$ on $\Si$
$$
T_{fS}(X,Y)=f\,T_S(X,Y)+X(f)\,SY-Y(f)\,SX.
$$
\end{enumerate}
\end{lema}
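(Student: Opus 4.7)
The three properties are all routine verifications, so the plan is simply to unwind the definition of $T_S$ and apply standard identities for the Levi-Civita connection and Lie bracket; there is no real obstacle.

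For property (1), skew-symmetry is immediate: swapping $X$ and $Y$ in the defining formula
$$T_S(X,Y)=\nabla_XSY-\nabla_YSX-S[X,Y]$$
gives the negative of the original expression, using only $[Y,X]=-[X,Y]$.

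For property (2), it suffices by skew-symmetry to check $\mathcal{C}^{\infty}$-linearity in the first slot, and additivity is clear, so the only thing to check is $T_S(fX,Y)=f\,T_S(X,Y)$ for a smooth function $f$. Expanding
$$T_S(fX,Y)=\nabla_{fX}SY-\nabla_Y S(fX)-S[fX,Y],$$
and using $\nabla_{fX}=f\nabla_X$, $\nabla_Y(fSX)=Y(f)SX+f\nabla_Y SX$, and the bracket identity $[fX,Y]=f[X,Y]-Y(f)X$, the two $Y(f)SX$ contributions cancel and one is left with $f\,T_S(X,Y)$.

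For property (3), the same type of computation applied to $fS$ in place of $S$ yields the extra tensorial correction terms. Writing
$$T_{fS}(X,Y)=\nabla_X(fSY)-\nabla_Y(fSX)-fS[X,Y],$$
and distributing each $\nabla$ via the Leibniz rule gives $X(f)SY+f\nabla_XSY-Y(f)SX-f\nabla_YSX-fS[X,Y]$, which regroups as $f\,T_S(X,Y)+X(f)SY-Y(f)SX$, as claimed. The whole lemma is therefore a direct exercise in the Leibniz rule for $\nabla$ together with the derivation property of the Lie bracket; the only thing to be mildly careful about is bookkeeping of the $X(f)$ and $Y(f)$ terms that arise from the connection and the bracket, and checking that in property (2) they cancel whereas in property (3) they survive as the claimed correction.
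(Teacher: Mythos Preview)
Your proof is correct and is exactly the straightforward computation the paper alludes to; in fact the paper does not spell out a proof at all, stating only that ``a straightforward computation shows'' the three properties. Your verification of each item via the Leibniz rule for $\nabla$ and the identity $[fX,Y]=f[X,Y]-Y(f)X$ is precisely what is intended.
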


Associated to the Codazzi tensor of a fundamental pair we define the
Codazzi function, thanks to which we will {\it measure how distant}
the pair is from satisfying the Codazzi equation.

\begin{definicion}\label{d5}
Let $(I,II)$ be a fundamental pair on a surface $\Si$ with
associated shape operator $S$. We will call Codazzi function of
$(I,II)$ to the map ${\cal T}_S:\Si\fl\r$ given by
\begin{equation}\label{funcioncodazzi}
I\left(T_S(v_1,v_2),T_S(v_1,v_2)\right)={\cal T}_S(p)\,
\left(I(v_1,v_1)\,I(v_2,v_2)-I(v_1,v_2)^2\right),
\end{equation}
where $v_1,v_2\in T_p\Si$, $p\in\Si$.
\end{definicion}

Observe that ${\cal T}_S$ is a well-defined differentiable function
since the Codazzi tensor is skew-symmetric. Besides, ${\cal T}_S$
vanishes identically if, and only if, $(I,II)$ is a Codazzi pair.

\begin{lema}\label{anterior}
Let $(I,II)$ be a fundamental pair on a surface $\Si$ with
associated shape operator $S$, mean curvature $H$ and extrinsic
curvature $K$. Let $z$ be a local conformal parameter for $I$ such
that
$$
I=2\lambda\,|dz|^2,\qquad II=Q\,dz^2+2H\,\lambda\,|dz|^2+\overline{Q}\,d\zb^2.
$$
Then
$$
|Q_{\zb}|^2=\frac{\lambda\,{\cal T}_{\widetilde{S}}}{2(H^2-K)}\,|Q|^2,
$$
where $\widetilde{S}$ is the traceless operator $S-H\,Id$, $Id_p$
being the identity map on the tangent plane at $p\in \Si$.
\end{lema}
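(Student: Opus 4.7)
The plan is to compute both sides of the claimed identity in the complex frame $\{\pz,\pzb\}$ associated to the conformal parameter $z$, and then to convert the result using the relation $|Q|^2=\lambda^2(H^2-K)$ from \eqref{modulodeq}. First, since $\widetilde{S}=S-H\,\id$, equation \eqref{endomorfismoweingarten} directly gives
$$\widetilde{S}\,\pz=\frac{Q}{\lambda}\,\pzb,\qquad\widetilde{S}\,\pzb=\frac{\Qb}{\lambda}\,\pz.$$
Using the Levi-Civita relations \eqref{levicivita} together with $[\pz,\pzb]=0$, I would then compute $T_{\widetilde{S}}(\pz,\pzb)=\nabla_{\pz}(\widetilde{S}\,\pzb)-\nabla_{\pzb}(\widetilde{S}\,\pz)$. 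In each of the two terms the contribution $-\Qb\lambda_z/\lambda^2$ (respectively $-Q\lambda_{\zb}/\lambda^2$) coming from differentiating $\Qb/\lambda$ (respectively $Q/\lambda$) cancels against the Christoffel piece $(\Qb/\lambda)(\lambda_z/\lambda)\pz$ (respectively $(Q/\lambda)(\lambda_{\zb}/\lambda)\pzb$), leaving
$$T_{\widetilde{S}}(\pz,\pzb)=\frac{\Qb_z}{\lambda}\,\pz-\frac{Q_{\zb}}{\lambda}\,\pzb.$$

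Next, I would take the $I$-norm squared of this vector. Since $I(\pz,\pz)=I(\pzb,\pzb)=0$, $I(\pz,\pzb)=\lambda$, and $\Qb_z=\overline{Q_{\zb}}$, only the cross term survives and
$$I\!\left(T_{\widetilde{S}}(\pz,\pzb),\,T_{\widetilde{S}}(\pz,\pzb)\right)=-\frac{2|Q_{\zb}|^2}{\lambda}.$$
Now, the defining identity \eqref{funcioncodazzi} of ${\cal T}_{\widetilde{S}}$ is biquadratic and alternating in $(v_1,v_2)$, so it extends by complex bilinearity and may be evaluated at $v_1=\pz$, $v_2=\pzb$. Using $I(\pz,\pz)I(\pzb,\pzb)-I(\pz,\pzb)^2=-\lambda^2$ and equating with the previous display yields $|Q_{\zb}|^2=\tfrac{\lambda^3}{2}\,{\cal T}_{\widetilde{S}}$. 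Substituting $\lambda^2=|Q|^2/(H^2-K)$ from \eqref{modulodeq} then produces the asserted expression.

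The calculation is otherwise routine; the only step demanding care is the precise cancellation of the $\lambda$-derivatives against the Christoffel terms in the expression for $T_{\widetilde{S}}(\pz,\pzb)$, which is exactly what makes the output depend on $Q_{\zb}$ (and $\Qb_z$) alone rather than on the derivatives of $\lambda$ as well. A minor conceptual point is the legitimacy of plugging the complex vectors $\pz,\pzb$ into \eqref{funcioncodazzi}; this is justified by the polynomial (biquadratic, alternating) character of both sides of that identity in $(v_1,v_2)$, in the same spirit as the use of the conformal frame in the proof of Theorem \ref{elteorema}.
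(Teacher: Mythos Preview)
Your proof is correct and follows essentially the same approach as the paper's: both compute $T_{\widetilde{S}}(\pz,\pzb)=\lambda^{-1}(\Qb_z\,\pz-Q_{\zb}\,\pzb)$ in the conformal frame, then apply the defining identity \eqref{funcioncodazzi} to obtain $\tfrac{2}{\lambda}|Q_{\zb}|^2={\cal T}_{\widetilde{S}}\,\lambda^2$ and finish with \eqref{modulodeq}. You have simply written out in more detail the cancellation of the $\lambda$-derivatives and made explicit the complex-bilinear extension of \eqref{funcioncodazzi}, which the paper uses implicitly.
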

\begin{proof}
Since the Levi-Civita connection of $I$ is given by
(\ref{levicivita}) and the shape operator $S$ by
(\ref{endomorfismoweingarten}), we have
\begin{eqnarray}
 T_{\widetilde{S}}\left(\frac{\partial\ }{\partial z},\frac{\partial\ }{\partial
\zb}\right)&=&\nabla_{\frac{\partial\ }{\partial z}}\widetilde{S}\frac{\partial\
}{\partial \zb}-\nabla_{\frac{\partial\ }{\partial \zb}}\widetilde{S}\frac{\partial\
}{\partial z}\ =\ \nabla_{\frac{\partial\ }{\partial
z}}\frac{\overline{Q}}{\lambda}\frac{\partial\ }{\partial z}-\nabla_{\frac{\partial\
}{\partial \zb}}\frac{Q}{\lambda}\frac{\partial\ }{\partial \zb}\nonumber\\
&=&\frac{1}{\lambda}\left(\overline{Q}_z\,\frac{\partial\ }{\partial z}-
Q_{\zb}\,\frac{\partial\ }{\partial \zb}\right).\label{primero1}
\end{eqnarray}

Hence, using (\ref{funcioncodazzi}) one gets
$\frac{2}{\lambda}|Q_{\zb}|^2={\cal T}_{\widetilde{S}}\,\lambda^2$.
The proof finishes using (\ref{modulodeq}).
\end{proof}

Given a fundamental pair $(I,II)$ on a surface $\Si$, we will denote
by $\Si_U\subseteq\Si$ the set of umbilical points of the pair. Then
we have

\begin{teorema}\label{thglover}
Let $(I,II)$ be a fundamental pair on a surface $\Si$ with
associated shape operator $S$, mean curvature $H$ and extrinsic
curvature $K$. Let us suppose that every point
$p\in\partial\Sigma_U$ has a neighborhood $V_p$ such that
$$
\frac{{\cal T}_{\widetilde{S}}}{H^2-K}\quad \mbox{is bounded in}\,
V_p\cap(\Si-\Si_U),
$$
where $\widetilde{S}=S-H\, Id$. Then either the Hopf differential of
$(I,II)$ vanishes identically or its zeroes are isolated and of
negative index.

In particular, if $\Si$ is a topological sphere then the pair is
totally umbilical.
\end{teorema}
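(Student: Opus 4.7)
The plan is to reduce the statement to a Cauchy--Riemann--type inequality for the Hopf differential $Q$ and then invoke the Main Lemma of \cite{AdCT}, in the same spirit as the proof of Theorem \ref{elteorema}. Around each $p\in\Si$ I would fix a local conformal parameter $z$ for $I$ and write $(I,II)$ as in (\ref{parholomorfo}). Lemma \ref{anterior} yields
$$|Q_{\zb}|^2=\frac{\lambda\,{\cal T}_{\widetilde{S}}}{2(H^2-K)}\,|Q|^2\qquad\text{on }\Si-\Si_U.$$
On any open set whose closure is disjoint from $\Si_U$ the coefficient is continuous, so there one automatically obtains $|Q_{\zb}|\leq h\,|Q|$ with $h$ continuous. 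Around a point of $\partial\Si_U$ the hypothesis supplies a neighborhood $V_p$ on which ${\cal T}_{\widetilde{S}}/(H^2-K)$ is bounded, so the same inequality holds on $V_p\cap(\Si-\Si_U)$ with a locally bounded $h$. At an umbilical point $q\in V_p$ both sides are zero: $Q(q)=0$ by definition, while $Q_{\zb}(q)=0$ either because $q$ is interior to $\Si_U$ and $Q$ vanishes in a whole neighborhood, or because $q$ is a limit of non-umbilical points and the inequality passes to the limit. Thus $|Q_{\zb}|\leq h\,|Q|$ holds on all of $V_p$.

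Next I would apply the Main Lemma of \cite{AdCT} (equivalently \cite[Lemma 2.7.1]{Jost}) around each point of $\Si$, obtaining the local dichotomy: either $Q\equiv 0$ in a neighborhood, or the point is an isolated zero of $Q$ of negative index. To globalize this, set $A=\mathrm{int}(\Si_U)$. Then $A$ is open, and any $q\in\partial A$ lies in $\Si_U\setminus\mathrm{int}(\Si_U)\subset\partial\Si_U$; but $q$ is simultaneously a limit of zeros of $Q$ (from $A$) and of non-zeros of $Q$ (since $q\notin A$), which rules out the isolated-zero alternative. The only possibility left is $Q\equiv 0$ near $q$, i.e.\ $q\in A$, contradicting $q\in\partial A$. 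Hence $\partial A=\emptyset$, and connectedness of $\Si$ forces either $A=\Si$ (in which case $Q\equiv 0$) or $A=\emptyset$ (in which case every zero of $Q$ is isolated of negative index).

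Finally, if $\Si$ is a topological sphere, the Hopf differential $Q\,dz^2$ defines a singular line field on $\Si$ whose singularities are exactly the zeros of $Q$, and the Poincar\'e--Hopf index theorem for line fields gives that the sum of the indices at these singularities equals $\chi(\Si)=2$. Since every isolated zero of $Q$ has negative index, the only way to achieve a positive sum is to have no zeros at all, so $Q\equiv 0$ and $(I,II)$ is totally umbilical. The main technical delicacy is the one handled in the first paragraph: extending the Cauchy--Riemann inequality $|Q_{\zb}|\le h|Q|$ continuously across $\partial\Si_U$ so that the Main Lemma applies uniformly. Once that is secured, the remainder of the proof mirrors almost verbatim the argument given for Theorem \ref{elteorema}.
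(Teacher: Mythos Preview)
Your proof is correct and follows essentially the same approach as the paper: both derive the Cauchy--Riemann inequality $|Q_{\zb}|^2\le m_0\frac{\lambda}{2}|Q|^2$ on a neighborhood of each $p\in\partial\Sigma_U$ via Lemma~\ref{anterior}, extend it across the umbilical points, and then invoke the Main Lemma of \cite{AdCT} together with the Poincar\'e index theorem. Your treatment is in fact more careful than the paper's, since you make the globalization step explicit via the clopen argument on $A=\mathrm{int}(\Sigma_U)$, whereas the paper leaves this implicit.
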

\begin{proof}
If $p\in\partial\Sigma_U$, then there exists an open neighborhood
$V_p$ and a constant $m_0$ such that $\frac{{\cal T}_{\widetilde{S}}}{H^2-K}\leq m_0$ in
$V_p\cap(\Si-\Si_U)$. Thus, using Lemma \ref{anterior}
\begin{equation}\label{noseyo}
|Q_{\zb}|^2\ \leq\ m_0\,\frac{\lambda}{2}\,|Q|^2
\end{equation}
in $V_p\cap(\Si-\Si_U)$. Besides, since this inequality is also
valid in the interior of $\Sigma_U$, we conclude that (\ref{noseyo})
holds in $V_p$.

Therefore, using again \cite{AdCT} or \cite{Jost}, we have that $p$ is an
isolated zero of negative index of the Hopf differential, as we
wanted to prove.

To end up, if $\Si$ is a topological sphere the result follows from
the Poincaré index Theorem as in Theorem \ref{elteorema}.
\end{proof}

Again, we point out that if $\Si$ is a topological torus under the
assumptions above, then the pair is either totally umbilical or umbilically free. Analogously, if $\Si$ is a closed topological
disk and $\partial\Si$ is a line of curvature of $(I,II)$, then the
pair is totally umbilical.

If $(I,II)$ is a Codazzi pair we have
$$
T_{\widetilde{S}}\left(\frac{\partial\ }{\partial z},\frac{\partial\ }{\partial
\zb}\right)=-\nabla_{\frac{\partial\ }{\partial z}}H\frac{\partial\
}{\partial \zb}+\nabla_{\frac{\partial\ }{\partial \zb}}H\frac{\partial\
}{\partial z}=H_{\zb}\frac{\partial\
}{\partial z}-H_{z}\frac{\partial\
}{\partial \zb}
$$
and, therefore, its Codazzi function is
$$
{\cal T}_{\widetilde{S}}\,=\,\frac{2}{\lambda}\,|H_{z}|^2\,=\,\|\nabla H\|^2,
$$
where $\|\nabla H\|$ stands for the modulus of the gradient of $H$
for the Riemannian metric $I$. Thus, Theorem \ref{thglover} can be
applied for Codazzi pairs whenever the quotient $\|\nabla
H\|^2/(H^2-K)$ is bounded.

However, this result can also be applied to fundamental pairs which
are not Codazzi pairs, as was implicitly made in \cite{EGR} in order
to classify the complete surfaces with constant extrinsic curvature
in the product spaces $\h^2\times\r$ and $\s^2\times\r$.

\section{Holomorphic quadratic differentials.}\label{s4}

In this section we will see that, under certain assumptions on a
Codazzi pair, it is possible to obtain a new Codazzi pair with vanishing
constant mean curvature which is geometrically related to the first
one. Thanks to this second Codazzi pair, we will show the existence
of a holomorphic quadratic differential which will provide important
information on the geometric behavior of the initial pair.

If $(u,v)$ are doubly orthogonal parameters for a fundamental pair
$(I,II)$, then we can write
$$
I=E\,du^2+G\,dv^2,\qquad II=k_1\,E\,du^2+k_2\,G\,dv^2.
$$
Hence, the Codazzi tensor acting on the vector fields
$\frac{\partial\ }{\partial u},\frac{\partial\ }{\partial v}$ can be
expressed as
\begin{eqnarray}
T_S\left(\frac{\partial\ }{\partial u},\frac{\partial\ }{\partial
v}\right)&=&\nabla_{\frac{\partial\ }{\partial u}}S\frac{\partial\ }{\partial
v}-\nabla_{\frac{\partial\ }{\partial v}}S\frac{\partial\ }{\partial
u}\,=\,\nabla_{\frac{\partial\ }{\partial u}}k_2\frac{\partial\ }{\partial
v}-\nabla_{\frac{\partial\ }{\partial v}}k_1\frac{\partial\ }{\partial u}\nonumber\\
&=&(k_2)_u\,\frac{\partial\ }{\partial v}-(k_1)_v\,\frac{\partial\ }{\partial
u}+(k_2-k_1)\left(\frac{E_v}{2E}\frac{\partial\ }{\partial
u}+\frac{G_u}{2G}\frac{\partial\ }{\partial v}\right)\label{Tuv}\\
&=&-\frac{1}{E}\left((k_1E)_v-H\,E_v\right)\,\frac{\partial\ }{\partial u}+
\frac{1}{G}\left((k_2G)_u-H\,G_u\right)\,\frac{\partial\ }{\partial v}\nonumber\,,
\end{eqnarray}
where $H$ is the mean curvature of $(I,II)$.

We observe that we can take doubly orthogonal parameters in a
neighborhood of every non umbilical point as well as in a
neighborhood of every point in the interior of
$\Si_U=\{\mbox{umbilical points of }(I,II)\}$. Thus, the set of
points where there exist doubly orthogonal parameters is dense in
$\Si$. Consequently, all the properties that we prove by using this
kind of parameters, will be extended to the whole surface by
continuity.

Throughout this section we will use the new quadratic form $II'$ associated to the fundamental pair $(I,II)$ given by
$$
II'=II-H\,I.
$$

\begin{lema}\label{lemita}
Let $(I,II)$ be a Codazzi pair on a surface $\Si$ with mean and
extrinsic curvatures $H$ and $K$ respectively. Let $\varphi$ be a
differentiable function on $\Si$ such that the function
$\sinh\varphi/\sqrt{H^2-K}$ can be differentiably extended to $\Si$.
Then
$$
\begin{array}
{l} {\displaystyle A=\cosh\varphi\,I+\frac{\sinh\varphi}{\sqrt{H^2-K}}\,II'}\\[6mm]
{\displaystyle B=\sqrt{H^2-K}\sinh\varphi\,I+\cosh\varphi\,II'},
\end{array}
$$
is a fundamental pair with mean curvature $H(A,B)=0$, extrinsic
curvature $K(A,B)=-(H^2-K)$ and such that its Codazzi tensor
$T_{\widetilde{S}}$ satisfies
\begin{equation}\label{TStilde}
T_{\widetilde{S}}(X,Y)=\omega(Y)\,X-\omega(X)\,Y,\qquad\omega=\frac{1}{2}(dH-\sqrt{H^2-K}d\varphi),
\end{equation}
for all $X,Y\in\campo(\Si)$.
\end{lema}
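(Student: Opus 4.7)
The plan is to perform all the computations in doubly orthogonal parameters $(u,v)$ for the original Codazzi pair $(I,II)$. Such parameters exist on a dense open subset of $\Sigma$ (away from the boundary of the umbilical locus, as discussed earlier in the paper), and since both $A$ and $B$ are linear combinations of $I$ and $II'=II-HI$, they remain diagonal in these parameters. Writing
$$I = E\,du^2 + G\,dv^2, \qquad II' = -w\,E\,du^2 + w\,G\,dv^2,$$
with $w=\sqrt{H^2-K}$ and the convention $k_1=H-w$, $k_2=H+w$, a direct substitution into the definitions of $A$ and $B$ gives
$$A = e^{-\varphi}\,E\,du^2 + e^{\varphi}\,G\,dv^2, \qquad B = -w\,e^{-\varphi}\,E\,du^2 + w\,e^{\varphi}\,G\,dv^2.$$

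From these expressions it is immediate that $A$ is a Riemannian metric (so $(A,B)$ is a genuine fundamental pair), that $(u,v)$ remains doubly orthogonal for $(A,B)$, and that the principal curvatures of $(A,B)$ are $\tilde k_1=-w$ and $\tilde k_2=w$. This yields $H(A,B)=0$ and $K(A,B)=\tilde k_1\tilde k_2=-(H^2-K)$, establishing the first three assertions of the lemma. The hypothesis that $\sinh\varphi/w$ extends differentiably to $\Sigma$ is exactly what is needed to make this construction meaningful across the umbilical locus.

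For the Codazzi tensor identity I would apply formula (\ref{Tuv}) to the new pair $(A,B)$: with $\tilde E=e^{-\varphi}E$, $\tilde G=e^{\varphi}G$ and $\tilde H=0$, the computation of $T_{\widetilde S}(\partial_u,\partial_v)$ reduces to evaluating $(\tilde k_1\tilde E)_v/\tilde E$ and $(\tilde k_2\tilde G)_u/\tilde G$. The essential input is that the Codazzi equations for the original pair, namely $(k_1E)_v=HE_v$ and $(k_2G)_u=HG_u$, translate into
$$E_v = \frac{E(H_v-w_v)}{w}, \qquad G_u = -\frac{G(H_u+w_u)}{w}.$$
Plugging these relations into the expansions of $(\tilde k_1\tilde E)_v$ and $(\tilde k_2\tilde G)_u$ triggers the cancellation of every term involving $w_u$, $w_v$, $E_v$, $G_u$, and one is left with a multiple of
$$(H_v - w\varphi_v)\,\partial_u - (H_u - w\varphi_u)\,\partial_v,$$
which is precisely $\omega(\partial_v)\partial_u-\omega(\partial_u)\partial_v$ with $\omega$ as stated in (\ref{TStilde}).

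Finally, since $T_{\widetilde S}$ is $\mathcal C^\infty$-bilinear and skew-symmetric by Lemma \ref{l1}, its value on any pair of tangent vectors is determined by $T_{\widetilde S}(\partial_u,\partial_v)$; thus the identity (\ref{TStilde}) holds on the dense open subset where doubly orthogonal parameters exist and extends to all of $\Sigma$ by continuity. I expect the main obstacle to be purely computational: one must keep careful track of sign conventions for the ordering of principal curvatures and invoke the original Codazzi equations at exactly the right step to produce the cancellations, but no deeper geometric idea is required beyond the skillful use of formula (\ref{Tuv}).
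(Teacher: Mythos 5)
Your proof follows essentially the same route as the paper's: diagonalize $I$, $II'$, $A$, $B$ in doubly orthogonal parameters on the dense set where they exist, read off $H(A,B)=0$ and $K(A,B)=-(H^2-K)$ from the diagonal expressions, and feed the original Codazzi relations $(k_1E)_v=HE_v$, $(k_2G)_u=HG_u$ into formula (\ref{Tuv}) applied to the new pair, extending by bilinearity and continuity; the only difference is your opposite ordering convention for the principal curvatures, which merely swaps $e^{\varphi}$ and $e^{-\varphi}$. One remark: carried to the end, your computation (exactly like the paper's own) produces the coefficient $H_v-\sqrt{H^2-K}\,\varphi_v$ rather than half of it, so the one-form is really $dH-\sqrt{H^2-K}\,d\varphi$ without the factor $\tfrac{1}{2}$ written in (\ref{TStilde}) --- a harmless constant, shared with the source, that affects none of the later applications since only the vanishing of $\omega$ is ever used.
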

\begin{proof}
Let $(u,v)$ be doubly orthogonal parameters for the Codazzi pair
$(I,II)$ such that
$$
I=E\,du^2+G\,dv^2,\qquad II=k_1\,E\,du^2+k_2\,G\,dv^2,
$$
being $k_1\geq k_2$.

Then we can write $(A,B)$ as
$$
A=e^\varphi\,E\,du^2+e^{-\varphi}\,G\,dv^2,\qquad
B=\frac{k_1-k_2}{2}\,\left(e^\varphi\,E\,du^2-e^{-\varphi}\,G\,dv^2\right).
$$

Hence, $A$ is a Riemannian metric on $\Si$ and the mean and extrinsic curvatures of
the pair are given by $H(A,B)=0$ and $K(A,B)=-(H^2-K)$.

In addition, from (\ref{Tuv}) and taking into account that $(I,II)$
is a Codazzi pair, we get
\begin{eqnarray*}
T_{\widetilde{S}}\left(\frac{\partial\ }{\partial u},\frac{\partial\ }{\partial
v}\right)&=&-\frac{1}{e^\varphi\,E}\left(\frac{k_1-k_2}{2}\,\left(e^\varphi\,E\right)\right)_v
\frac{\partial\ }{\partial
u}-\frac{1}{e^{-\varphi}\,G}\left(\frac{k_1-k_2}{2}\,\left(e^{-\varphi}\,G\right)\right)_u
\frac{\partial\ }{\partial v}\\
&=&-\frac{1}{2}\left((k_1)_v-(k_2)_v+(k_1-k_2)\varphi_v-2(k_1)_v)\right)\frac{\partial\
}{\partial u}\\
&&-\frac{1}{2}\left((k_1)_u-(k_2)_u-(k_1-k_2)\varphi_u+2(k_2)_u)\right)\frac{\partial\
}{\partial v}\\
&=&\omega\left(\frac{\partial\ }{\partial v}\right)\frac{\partial\ }{\partial
u}-\omega\left(\frac{\partial\ }{\partial u}\right)\frac{\partial\ }{\partial v}.
\end{eqnarray*}
Finally, by linearity we obtain (\ref{TStilde}).
\end{proof}

Under the assumptions of Lemma \ref{lemita}, if we take a conformal parameter $z$ for $A$ and put
\begin{equation}\label{elnuevopar}
A=2\lambda\,|dz|^2,\qquad B=Q\,dz^2+\overline{Q}\,d\zb^2,
\end{equation}
then from (\ref{primero1}) and (\ref{TStilde}) we get
$$
Q_{\zb}=\lambda\,\omega\left(\frac{\partial\ }{\partial
z}\right)=\frac{\lambda}{2}\left(H_z-\sqrt{H^2-K}\,\varphi_z\right).
$$

Moreover, we obtain from (\ref{TStilde}) that the pair $(A,B)$ given by (\ref{elnuevopar}) is a Codazzi pair if and only if $dH-\sqrt{H^2-K}d\varphi=0$, or equivalently $Q_{\zb}=0$.

With all of this we have
\begin{corolario}\label{corolario3}
Let $(I,II)$ be a Codazzi pair on a surface $\Si$ with mean and
extrinsic curvatures $H$ and $K$ respectively. Let $\varphi$ be a
differentiable function on $\Si$ such that the function $\sinh\varphi/\sqrt{H^2-K}$ can be differentiably extended
to $\Si$. Then the fundamental pair
\begin{equation}\label{AB}
\begin{array}
{l} 
{\displaystyle A=\cosh\varphi\,I+\frac{\sinh\varphi}{\sqrt{H^2-K}}\,II'}\\[6mm]
{\displaystyle B=\sqrt{H^2-K}\sinh\varphi\,I+\cosh\varphi\,II'},
\end{array}
\end{equation}
has mean curvature $H(A,B)=0$ and extrinsic curvature $K(A,B)=-(H^2-K)$. In addition, the following conditions are equivalent:
\begin{itemize}
\item $(A,B)$ is a Codazzi pair,
\item the Hopf differential of
$(A,B)$ is holomorphic for the conformal structure induced by $A$,
\item $
dH-\sqrt{H^2-K}d\varphi=0.$
\end{itemize}
\end{corolario}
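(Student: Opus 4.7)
The plan is to unpack the fact that the corollary is essentially a repackaging of Lemma~\ref{lemita} and of the short computation that immediately precedes the statement. The identities $H(A,B)=0$ and $K(A,B)=-(H^{2}-K)$ are already established in Lemma~\ref{lemita}, so the only real task is to prove the equivalence of the three listed conditions.

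First I would verify that the Codazzi condition for $(A,B)$ is equivalent to (3). Because $H(A,B)=0$, the traceless part of the shape operator of $(A,B)$ coincides with the shape operator itself, so $(A,B)$ is a Codazzi pair if and only if $T_{\widetilde{S}}$ vanishes identically on $\Si$. By Lemma~\ref{lemita},
\[
T_{\widetilde{S}}(X,Y)=\omega(Y)\,X-\omega(X)\,Y,\qquad \omega=\tfrac{1}{2}\bigl(dH-\sqrt{H^{2}-K}\,d\varphi\bigr),
\]
and evaluating on any pair of linearly independent tangent vectors at a point $p$ shows that $T_{\widetilde{S}}(p)=0$ forces $\omega(p)=0$. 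Hence $T_{\widetilde{S}}\equiv 0$ is equivalent to $\omega\equiv 0$, which is precisely condition~(3); the converse implication is immediate.

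Next I would prove the equivalence between the Codazzi condition and the holomorphicity of the Hopf differential of $(A,B)$. Choose a local conformal parameter $z$ for the metric $A$ and write, using $H(A,B)=0$,
\[
A=2\lambda\,|dz|^{2},\qquad B=Q\,dz^{2}+\overline{Q}\,d\zb^{2}.
\]
The shape operator of $(A,B)$ then takes the form~(\ref{endomorfismoweingarten}) with vanishing diagonal term, so the calculation~(\ref{primero1}) applies verbatim to $\widetilde{S}$. Combining it with the expression for $T_{\widetilde{S}}$ given by Lemma~\ref{lemita}, a comparison of coefficients yields
\[
Q_{\zb}=\lambda\,\omega\!\left(\tfrac{\partial\ }{\partial z}\right)=\tfrac{\lambda}{2}\bigl(H_{z}-\sqrt{H^{2}-K}\,\varphi_{z}\bigr).
\]
Holomorphicity of the Hopf differential $Q\,dz^{2}$ of $(A,B)$ amounts to $Q_{\zb}\equiv 0$, and since $\omega$ is a real $1$-form while $\partial/\partial z=(\partial_{u}-i\,\partial_{v})/2$, the vanishing of the complex scalar $\omega(\partial/\partial z)$ separates into the vanishing of $\omega$ on the real frame $\partial_{u},\partial_{v}$. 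This closes the loop back to (3) and to the Codazzi condition.

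There is no genuine obstacle here; the only point requiring care is the identification of the traceless shape operator $\widetilde{S}$ appearing in Lemma~\ref{lemita} with the full shape operator of $(A,B)$ (which is legitimate because $H(A,B)=0$), and the remark that $\omega(\partial/\partial z)=0$ is equivalent to the vanishing of the real $1$-form $\omega$. Both are essentially bookkeeping, and once they are observed the three-way equivalence follows by the cycle (1)$\Leftrightarrow\omega\equiv 0\Leftrightarrow$(3) and (2)$\Leftrightarrow Q_{\zb}=0\Leftrightarrow\omega\equiv 0$.
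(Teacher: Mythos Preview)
Your proposal is correct and follows essentially the same route as the paper: the identities $H(A,B)=0$, $K(A,B)=-(H^{2}-K)$ and the formula $T_{\widetilde S}(X,Y)=\omega(Y)X-\omega(X)Y$ are taken from Lemma~\ref{lemita}, and then a conformal parameter for $A$ together with the computation~(\ref{primero1}) gives $Q_{\bar z}=\lambda\,\omega(\partial/\partial z)$, from which the three-way equivalence is read off. Your write-up is in fact a bit more careful than the paper's, since you make explicit why $\widetilde S$ coincides with the full shape operator of $(A,B)$ and why $\omega(\partial/\partial z)=0$ forces the real $1$-form $\omega$ to vanish.
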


Next we see some situations where the corollary above can be used.
In order to do that and following the classical notation, we give
the following definition
\begin{definicion}\label{sw}
We say that a Codazzi pair $(I,II)$ is a special Weingarten pair if there
exists a differentiable function $f$ defined on an interval ${\cal
J}\subseteq [0,\infty)$ such that its mean curvature $H$ and extrinsic curvature $K$
satisfy
$$
H=f(H^2-K).
$$
\end{definicion}

Now, let us suppose that the mean and extrinsic curvatures of a
Codazzi pair $(I,II)$ satisfy a general Weingarten relationship $W(H,K)=0$.
Let us parametrize by taking $H=H(t)$, $K=K(t)$, for $t$ varying in a
certain interval. Then, if we look for a solution of the type
$\varphi=\varphi(t)$ for the previous Equation $dH-\sqrt{H^2-K}d\varphi=0$, we have
$$
\sqrt{H(t)^2-K(t)}\,\varphi'(t)=H'(t).
$$

Therefore, if there exists a primitive $\varphi(t)$ of the function
$$\frac{H'(t)}{\sqrt{H(t)^2-K(t)}}$$
with $\sinh\varphi(t)/\sqrt{H(t)^2-K(t)}$ well-defined even at the
umbilical points, then the Codazzi pair $(A,B)$, given as (\ref{AB}), will exist on the whole surface $\Sigma$.

In addition, if $(I,II)$ is a special Weingarten pair satisfying
$H=f(H^2-K)$, then we can parametrize in the way $H^2-K=t^2$ and
$H=f(t^2)$. This allows us to take
\begin{equation}\label{varfi}
\varphi(t)=\int_0^t2f'(s^2)\,ds
\end{equation}
whenever there exist umbilical points (i.e. $t=0$ has sense), or any
primitive of $2f'(t^2)$ otherwise.

Thus, for special Weingarten pairs, the metric $A$ defined
as in Corollary \ref{corolario3} is always well-defined, because so
is the function $\sinh\varphi(t)/t$.

The metric $A$ for special Weingarten surfaces in $\r^3$ and $\h^3$
was first defined by R.L. Bryant in \cite{Br}. In
that work, he also found a holomorphic quadratic form for the metric $A$ which
agrees with the Hopf differential of the pair $(A,B)$. Thanks to it, Bryant
provided an easy proof of the fact that
every topological sphere in $\r^3$ or $\h^3$ satisfying a special
Weingarten relationship must be totally umbilical.

The abstract formulation which we have adopted in this work, allows
us to extend this result to general special  Weingarten
surfaces.

Another remarkable fact is that, in our abstract context, we are
able to recover every special Weingarten pair as follows

\begin{corolario}\label{corolario4}
Let $\Si$ be a surface and $f$ a differentiable function defined on
an interval ${\cal J}\subseteq [0,\infty)$. Let us take a primitive
$\varphi(t)$ of $2f'(t^2)$ on that interval such that the function
$\sinh\varphi(t)/t$ is well-defined. Then every special  Weingarten
pair $(I,II)$ on $\Si$ satisfying $H=f(H^2-K)$ is given by
\begin{equation}
\label{III}
\begin{array}
{l}
{\displaystyle I=-\frac{\sinh\varphi(t)}{t}\,Q+\cosh\varphi(t)\,A-
\frac{\sinh\varphi(t)}{t}\,\overline{Q},}\\[5mm]
{\displaystyle
II-f(t^2)\,I=-\cosh\varphi(t)\,Q+t\,\sinh\varphi(t)\,A-\cosh\varphi(t)\,\overline{Q},
}
\end{array}
\end{equation}
where $A$ is a Riemannian metric on $\Si$ and $Q$ a holomorphic
2-form for $A$ such that the image of the function
$t:\Si\fl[0,\infty)$ defined as $2\,|Q|=t\,A$ is contained in ${\cal
J}$. In particular, $t^2=H^2-K$.
\end{corolario}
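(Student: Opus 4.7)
The plan is to invert the construction of Corollary \ref{corolario3}. Starting from a special Weingarten pair $(I,II)$ on $\Sigma$ with $H=f(H^2-K)$, I set $t=\sqrt{H^2-K}$ (a function on $\Sigma$) and take the primitive $\varphi(t)$ of $2f'(s^2)$ prescribed in the statement. The key observation is that this choice of $\varphi$ makes the compatibility condition of Corollary \ref{corolario3} automatic; once this is granted, the formulas (\ref{III}) reduce to a purely linear-algebra inversion of the system (\ref{AB}), and the normalization $2|Q|=t\,A$ drops out of the extrinsic curvature of the resulting pair.

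First I would verify that $dH-\sqrt{H^2-K}\,d\varphi=0$ on $\Sigma$. Indeed, from $H=f(t^2)$ the chain rule gives $dH=2tf'(t^2)\,dt$; on the other hand, as $\varphi$ is a primitive of $2f'(s^2)$, the one-form $d\varphi$ equals $2f'(t^2)\,dt$, so $\sqrt{H^2-K}\,d\varphi=t\cdot 2f'(t^2)\,dt=dH$. Corollary \ref{corolario3} then applies: the pair $(A,B)$ defined by (\ref{AB}) is a Codazzi pair with $H(A,B)=0$, $K(A,B)=-t^2$, and its Hopf differential $Q\,dz^2$ is holomorphic with respect to the conformal structure determined by $A$.

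Next I would solve (\ref{AB}) for $I$ and $II'=II-HI$ in terms of $A$ and $B$. The coefficient matrix has determinant $\cosh^2\varphi-\sinh^2\varphi=1$, so it is invertible and yields
\[
I=\cosh\varphi\,A-\frac{\sinh\varphi}{t}\,B,\qquad II-HI=-t\sinh\varphi\,A+\cosh\varphi\,B.
\]
In a local conformal parameter $z$ for $A$, writing $B=Q\,dz^2+\overline{Q}\,d\bar z^2$ and substituting $H=f(t^2)$ produces precisely the two identities of (\ref{III}), up to the sign conventions chosen for $Q$. The normalization $2|Q|=t\,A$ is then read off from formula (\ref{modulodeq}) applied to $(A,B)$: if $A=2\lambda|dz|^2$ then, since $H(A,B)=0$, one has $K(A,B)=-|Q|^2/\lambda^2$, and equating with $-t^2$ gives $|Q|=t\lambda$, that is, $2|Q|=t\,A$.

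What looks like the main subtlety is really a bookkeeping point: making sure the construction is well-defined globally on $\Sigma$, in particular across the umbilical locus $\{t=0\}$. This is precisely what the hypothesis on $\varphi$ ensures, since $\sinh\varphi(t)/t$ extends smoothly at $t=0$ and hence all coefficients appearing in $A$, $B$ and in (\ref{III}) are smooth on $\Sigma$. That $A$ is genuinely Riemannian was already verified inside the proof of Lemma \ref{lemita}, where in doubly orthogonal parameters for $(I,II)$ one has $A=e^{\varphi}E\,du^2+e^{-\varphi}G\,dv^2$. Finally, the inclusion $t(\Sigma)\subseteq\mathcal{J}$ is automatic, since $t^2=H^2-K$ and $(I,II)$ satisfies the Weingarten relation on $\mathcal{J}$ by assumption.
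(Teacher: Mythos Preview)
Your proof is correct and follows exactly the paper's approach: apply Corollary \ref{corolario3} with $t=\sqrt{H^2-K}$ and $\varphi'(t)=2f'(t^2)$ so that $dH=\sqrt{H^2-K}\,d\varphi$, then invert the linear system (\ref{AB}) (determinant $\cosh^2\varphi-\sinh^2\varphi=1$) to recover $(I,II')$ from $(A,B)$ with $B=Q+\overline{Q}$ and read off $2|Q|=tA$ from $K(A,B)=-t^2$. The paper's own argument is simply terser, citing the discussion before the corollary for the compatibility condition and leaving the inversion to the reader; your caveat about signs is well placed, since the direct inversion gives $II'=\cosh\varphi\,B-t\sinh\varphi\,A$, the negative of the second line of (\ref{III}) as printed.
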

\begin{proof}
It suffices to observe that given a special  Weingarten pair
$(I,II)$, if we take $H^2-K=t^2$ (and therefore $H=f(t^2)$), we have
already proved that there exists $\varphi=\varphi(t)$, primitive of
$2f'(t^2)$, in the conditions of Corollary \ref{corolario3}. Thus,
there exists a pair $(A,B)$ made up of a Riemannian metric $A$ and a
quadratic form $B$ which can be written as $B=Q+\overline{Q}$, since
$H(A,B)=0$. Besides, the Hopf differential $Q$ of $(A,B)$ is a
holomorphic 2-form for the metric $A$ and
$$
t^2\,=\,H^2-K\,=\,-K(A,B)\,=\,4\,\frac{|Q|^2}{|A|^2}.
$$

Summing up, using (\ref{AB}) it is possible to recover $(I,II)$ from
$(A,B)$ as (\ref{III}). Finally, it is a straightforward computation
to check that any pair $(A,B)$ as above, gives a Codazzi pair
$(I,II)$ which is special Weingarten.
\end{proof}

\section{Applications in Space Forms}\label{s5}

In this section we focus our attention on surfaces in space forms. We will obtain
several results as a consequence of the abstract study developed
previously.

We start giving a geometrical argument in order to obtain height
bounds for a large amount of families of surfaces which satisfy a
maximum principle. The proof is based on some ideas used in \cite{EGR}.

\begin{definicion}\label{d7}
We say that a family ${\cal A}$ of oriented surfaces in $\r^3$
satisfies the Hopf maximum principle if the following properties are
satisfied:
\begin{enumerate}
\item ${\cal A}$ is invariant under isometries of $\r^3$. In other words, if $\Si\in{\cal A}$
and $\varphi$ is an isometry of $\r^3$, then $\varphi(\Si)\in{\cal
A}$.
\item If $\Si\in{\cal A}$ and $\widetilde{\Si}$ is another surface contained in $\Si$, then
$\widetilde{\Si}\in{\cal A}$.
\item  There is an embedded compact surface without boundary in ${\cal A}$.
\item Whichever two surfaces in ${\cal A}$ satisfy the maximum principle (interior and boundary).
\end{enumerate}
\end{definicion}

Note that a large amount of families of surfaces verify the Hopf
maximum principle. Classical examples of this fact are the family of
surfaces with constant mean curvature $H\neq 0$ and the family of
surfaces with positive constant extrinsic curvature $K$. And, more
generally, the family of special Weingarten surfaces in $\r^3$
satisfying a relation of the type $H=f(H^2-K)$, where $f$ is a
differentiable function defined on an interval ${\cal J}\subseteq
[0,\infty)$ with $0\in {\cal J}$, such that $f(0)\neq 0$ and
$4tf'(t)^2<1$ for all $t\in{\cal J}$ (see \cite{RoS}).

We also point out that if a family of surfaces ${\cal A}$ satisfy
the Hopf maximum principle, then there exists, up to isometries of
$\r^3$, a unique embedded compact surface $\Si$ without boundary in
${\cal A}$. Such surface is, necessarily, a totally umbilical
sphere.

To see this, it suffices to observe that the Alexandrov reflection
principle works for surfaces in ${\cal A}$. Thus, for every plane
$P\subseteq\r^3$ there exists a plane, parallel to $P$, which is a
symmetry plane of $\Si$. Therefore, $\Si$ is a round sphere.

In addition, there cannot be two totally umbilical spheres
$\Si_1,\Si_2$ in ${\cal A}$ which are non isometric. Otherwise, up
to isometries, we can suppose that one of them, let us say $\Si_1$,
is contained in the bounded region determined by $\Si_2$. If we move
$\Si_1$ until it meets first $\Si_2$ and at this contact point the
normal vectors to $\Si_1,\Si_2$ coincide, we can conclude that
$\Si_1=\Si_2$ from the maximum principle. If the normal vectors at
that point do not coincide, we keep on moving $\Si_1$ until it meets
$\Si_2$ at a last contact point, where necessarily the normal
vectors do coincide, which allows us, as before, to assert that
$\Si_1=\Si_2$.

Now, let us see that there exists a constant $c_{\cal A}$ such that
for all compact surface $\Si\in{\cal A}$ whose boundary is contained
in a plane $P$, the maximum distance from a point $p\in\Si$ to $P$
is bounded by $c_{\cal A}$. This bound only depends on the radius of
the unique totally umbilical sphere contained in the family ${\cal
A}$.

Although we will not provide optimal estimates here, only the
existence of such height estimates respect to planes will allow us
to get interesting consequences regarding several aspects of
embedded surfaces in $\r^3$ (see \cite{KKMS,KKS,M,RoS}).

We will start studying graphs $\Sigma$ with boundary contained in a
plane $P$ of $\r^3$. Up to an isometry, we can assume that $P$ is
the $xy-$plane, and so
$$
\Sigma = \set{ (x,y , u(x,y)) \in \r ^3 : \, \, (x,y)\in \Omega \subseteq \r^2 }.
$$

\begin{teorema}\label{teoremaco}
Let ${\cal A}$ be a family of surfaces in $\r^3$ satisfying the Hopf
maximum principle, and $\Sigma\in{\cal A}$ a compact graph on a
domain $\Omega$ in the $xy-$plane with $\partial\Sigma$ contained in
this plane. Then for all $p\in\Si$, the distance in $\r ^3$ from $p$
to the $xy-$plane is less or equal to $4R_{\cal A}$. Here, $R_{\cal
A}$ stands for the radius of the unique totally umbilical sphere in
the family ${\cal A}$.
\end{teorema}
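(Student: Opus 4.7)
The plan is to argue by contradiction using a sphere-barrier argument in the spirit of \cite{RoS} and \cite{EGR}. Suppose the conclusion fails, so there exists a point $p\in\Sigma$ with height $h=\text{dist}(p,\{z=0\})>4R_{\mathcal{A}}$. Since $\Sigma$ is a compact graph $z=u(x,y)$ with $u|_{\partial\Omega}=0$ and, without loss of generality, $u>0$ on $\Omega$, the surface $\Sigma$ together with $\overline{\Omega}\times\{0\}$ bounds a compact region $W\subseteq\r^{3}$. Let $S$ denote the totally umbilical sphere of radius $R_{\mathcal{A}}$ which, as established in the preceding discussion, belongs to $\mathcal{A}$ and is unique up to isometry; orient it so that its inward normal agrees with the inward normal of $\Sigma$ toward $W$.

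The key step is to place a copy of $S$ strictly inside $W$ and slide it by a continuous family of translations until it first touches $\partial W$, then apply property (4) of Definition \ref{d7} at the contact point. Because $h>4R_{\mathcal{A}}$, along the vertical segment joining $(\pi(p),0)$ to $p$ one can position $S$ with its center at the midheight; the sphere occupies the slab $\{h/2-R_{\mathcal{A}}\leq z\leq h/2+R_{\mathcal{A}}\}$, with vertical clearance strictly greater than $R_{\mathcal{A}}$ both below (to the $xy$-plane) and above (to the top of $\Sigma$). Translating $S$ upward along this vertical line, the first contact with $\partial W$ must occur at an interior point of $\Sigma$ rather than on $\overline{\Omega}\times\{0\}$, precisely because of the guaranteed clearance on the lower side. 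At the contact point, the translated sphere $S'$ and $\Sigma$ are tangent, lie on the same side locally, and both belong to $\mathcal{A}$ (property (1)); property (4) then forces local coincidence. A standard connectedness-continuation argument, invoking property (2), extends this coincidence to all of $\Sigma$, contradicting the fact that $\partial\Sigma\neq\varnothing$ lies in $\{z=0\}$ while $S'$ is compact without boundary.

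The main obstacle will be verifying rigorously that the sliding of $S$ can be performed so that it never escapes $W$ through the lateral side $\partial\Omega\times\{0\}$ or through $\Sigma$ in a non-tangential manner before the first tangency. Choosing a purely vertical sliding direction through $\pi(p)$ finesses the lateral issue, since the vertical line $\{\pi(p)\}\times[0,h]$ is contained in $\overline{W}$ by the graph property and connects two interior segments of $W$. The arithmetic of the constant decomposes as $R_{\mathcal{A}}$ (lower clearance) $+\,2R_{\mathcal{A}}$ (diameter of $S$) $+\,R_{\mathcal{A}}$ (upper clearance), which is exactly why the bound $4R_{\mathcal{A}}$ appears; as the paper itself notes, the estimate is not sharp, and a refinement (e.g.\ via Alexandrov reflection through horizontal planes) should lower the constant, but the present bound already suffices for the applications that follow in Section \ref{s5}.
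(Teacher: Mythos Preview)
Your argument has a genuine gap in the horizontal direction. You claim that the sphere $S$ centered at $(\pi(p),h/2)$ lies strictly inside $W$, but you only verify the \emph{vertical} clearance. Horizontally the sphere extends a distance $R_{\mathcal A}$ from the line $\{\pi(p)\}\times\r$, and nothing in the hypotheses prevents $\pi(p)$ from being arbitrarily close to $\partial\Omega$, or the graph $u$ from dropping steeply next to $\pi(p)$ (think of a thin tall spike). In that situation the sphere is not contained in $W$ at all, so the sliding argument never gets started. Your sentence ``the vertical line $\{\pi(p)\}\times[0,h]$ is contained in $\overline{W}$'' is true but irrelevant: the line is one--dimensional, the sphere is not. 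If instead you try to slide the ball upward from $z=-\infty$ and look for the first contact with $\Sigma$, that contact may well occur at a point of $\partial\Sigma\subset\{z=0\}$, where the interior maximum principle is unavailable and tangency of the two surfaces is not even guaranteed.

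This missing horizontal control is precisely what the paper's proof is designed to supply, and it explains why that proof has two genuinely different steps. First one shows that for every $t>2R_{\mathcal A}$ each component of the planar section $P(t)\cap\Sigma$ bounds a domain of diameter at most $2R_{\mathcal A}$; this is done by sliding the sphere $\Sigma_0$ \emph{horizontally} along a geodesic through a vertical ``rectangle'' $\Pi\subset Q$ built from the vertical segments below a curve in that section, with a separate argument to handle the case in which the normals at first contact are opposite. Only after this horizontal bound is available does the second step bound the height of the part of $\Sigma$ above $P(2R_{\mathcal A})$ by that diameter, via Alexandrov reflection with respect to a family of planes tilted at $\pi/4$. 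The constant $4R_{\mathcal A}=2R_{\mathcal A}+2R_{\mathcal A}$ therefore records these two steps, not the vertical decomposition $R_{\mathcal A}+2R_{\mathcal A}+R_{\mathcal A}$ you propose.
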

\begin{proof}
Let $\Sigma\in{\cal A}$ be a graph on a domain $\Omega$ in the
$xy-$plane and $\Sigma _0$ the unique totally umbilical sphere of
${\cal A}$. Let $P(t)$ be the foliation of $\r^3$ by horizontal
planes, $P(t)$ being the plane at height $t$.

Let us see that for every $t > 2R_{\cal A}$, the diameter of any
open connected component bounded by $\Sigma (t) = P(t)\cap \Sigma $
is less than or equal to $2R_{\cal A}$.

Indeed, let us suppose that this assertion is not true. Then, for
some connected component $C(t)$ of $\Sigma (t)$, there are points
$p,q$ in the interior of the domain $\Omega (t)$ in $P(t)$ bounded
by $C(t)$ such that $\mbox{dist}(p,q)> 2R_{\cal A}$. Let $Q$ be the
domain in $\r^3$ bounded by $\Sigma \cup \Omega $. Let $\beta$ be a
curve in $\Omega (t)$ joining $p$ and $q$, $\beta$ and $C(t)$ being
disjoint. Let $\Pi$ be the \emph{rectangle} given by
$$
\Pi = \set{\alpha _s (r) \, : \, \, s \in \mathcal{I} , \, r \in [0, t]}
$$
where $\mathcal{I}$ is the interval where $\beta$ is defined, and
$\alpha _s$ is the geodesic with initial data $\alpha _s (0) = \beta
(s)$ and $\alpha _s ' (0) = -e_3$, $r$ being the length arc parameter
along $\alpha _{s}$ and $e_3=(0,0,1)$.

Since $\Sigma $ is a graph and $\beta $ is contained in the interior
of the domain determined by $C(t)$, then $\Pi\subset Q$. Let
$\widetilde{p}\in \Pi$ be a point whose distance to $\partial \Pi $
is greater than $R_{\cal A}$. Note that, according to our
construction of $\Pi$, the point $\widetilde{p}$ necessarily exists.

Let $\eta(r)$ be a horizontal geodesic passing through
$\widetilde{p}$ and such that every point in $\eta(r)$ is far from
$\partial\Pi$ a distance greater than $R_{\cal A}$. Observe that such a
geodesic can be chosen as the horizontal line in the plane $P(t_1)$
containing the point $\widetilde{p}$ and being orthogonal to the
vector joining $p$ and $q$. Let $\widetilde{q_1}$ be the first point
where $\eta$ meets $Q$, and $\widetilde{q_2}$ the last one.

Now, let us consider the spheres $\Si_0(r)\in{\cal A}$ centered at
every point $\eta(r)$. Note that these spheres can be obtained from
the rotational sphere $\Sigma _0$ by means of a translation of $\r
^3$.

There exists a first sphere in this family (coming from
$\widetilde{q_1}$) which meets $\Si$. If the normal vectors of both
surfaces coincide at this point, we conclude that both surfaces
agree by the maximum principle. On the other hand, if the normal
vectors are opposite, we reason as follows.

Let us consider the first sphere $\Si_0(r_0)$ in the family above
(coming from $\widetilde{q_1}$) which meets $\Pi$ at an interior point
of $\Pi$.

For every $r>r_0$ we consider the piece $\widetilde{\Si}_0(r)$ of
the sphere $\Si_0(r)$ which has gone through $\Pi$. Since these
spheres leave $Q$ at $\widetilde{q_2}$
and none of them meets $\partial\Pi$, there exists a first value
$r_1$ such that $\widetilde{\Si}_0(r_1)$ meets first $\partial Q
\cap \Sigma $ at a point $\widetilde{q_0}$. Thus, applying the
maximum principle to $\Si_0(r_1)$ and $\Si$ at $\widetilde{q}_0$, we conclude
that both surfaces agree, which is a contradiction.

Therefore we obtain that, for height  $t=2R_{\cal A}$, the diameter
of every open connected component bounded by $\Sigma (t) = P(t)\cap
\Sigma $ is less than or equal to $2R_{\cal A}$.\vspace{2mm}

To finish, we will see that $P(t)\cap \Sigma$ is empty for $t>
4R_{\cal A}$. To do that, it suffices to prove the following assertion
\begin{quote}
Let $\Omega_1$ be a connected component bounded by $\Sigma (2R_{\cal
A})$ in $P(2R_{\cal A})$. The distance from any point in $\Sigma$
(which is a graph on $\Omega_1$) to the plane $P(2R_{\cal A})$ is
less than or equal to the diameter of $\Omega_1$.
\end{quote}

Let $\sigma$ be a support line of $\partial\Omega_1$ in $P(2R_{\cal
A})$ with exterior unitary normal vector $v$, and let us take
$\eta(r)$ a geodesic such that $\eta(0)\in\sigma$ and
$\eta'(0)=\frac{1}{\sqrt{2}}(v+e_3)$.

Now, let us consider for every $r$ the plane $\Pi(r)$ in $\r^3$
passing through $\eta(r)$  which is orthogonal to
$\eta'(r)=\eta'(0)$. Such planes intersect every horizontal plane in
a line parallel to $\sigma$, being $\pi/2$ the angle between them.

If the assertion above was not true, there would exist a point
$p\in\Si$ over $\Omega_1$
such that its height on the plane $P(2R_{\cal A})$ would be greater
than the diameter of $\Omega_1$.

Let $\Si_1$ be the compact piece of $\Si$ which is a graph on $\Omega_1$.
Observe that, for $r$ big enough, $\Pi(r)$ does not meet $\Si_1$. In
addition, for $r=0$ the plane $\Pi(0)$ contains the line $\sigma$,
and the reflection of $p$ with respect to $\Pi(0)$
is a point whose vertical projection on $P(2R_{\cal A})$ is not in
$\Omega_1$. Therefore, using the Alexandrov reflection principle for
the planes $\Pi(r)$ with $r$ coming from infinity, there exists a
first value $r_0>0$ such that either the reflection of the piece of
$\Si_1$ which is over $\Pi(r)$ meets first $\Si_1$ at an interior
point or both surfaces are tangent at a point in the boundary. But
this is a contradiction, by the maximum principle.

This finishes the proof.
\end{proof}

As a consequence of this result, we are able to bound the maximum distance
attained by an embedded compact surface whose boundary is contained
in a plane.

\begin{corolario}\label{corolariaco}
Let ${\cal A}$ be a family of surfaces in $\r^3$ satisfying the Hopf
maximum principle. Then every embedded compact surface
$\Sigma\in{\cal A}$ whose boundary is contained in a plane $P$
verifies that for every $p\in\Si$ the distance in $\r ^3 $ from $p$
to the plane $P$ is less than or equal to $8R_{\cal A}$. Here,
$R_{\cal A}$ denotes the radius of the unique totally umbilical
sphere contained in ${\cal A}$.
\end{corolario}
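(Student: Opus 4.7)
The plan is to reduce the general embedded case to the graph case handled in Theorem \ref{teoremaco} by an Alexandrov reflection argument with respect to planes parallel to $P$, in the spirit of Korevaar--Kusner--Solomon and of Rosenberg--Sa Earp \cite{RoS}. After an isometry, assume $P$ is the $xy$-plane and let $\Sigma^+=\Sigma\cap\{z\geq 0\}$, whose boundary lies in $P$. It is enough to bound the supremum of $z$ on $\Sigma^+$ by $8R_{\mathcal{A}}$, since the symmetric argument gives the bound on $\Sigma^-=\Sigma\cap\{z\leq 0\}$. Write $P(t)=\{z=t\}$ and, for each $t\geq 0$, let $\Sigma^+(t)$ denote the reflection of $\Sigma^+\cap\{z\geq t\}$ across $P(t)$.

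Next I would slide $P(t)$ down from $+\infty$ and define
\[
t^{*}=\inf\bigl\{\,t\geq 0:\ \Sigma^+\cap\{z\geq t\}\ \text{is a graph over}\ P(t)\ \text{and}\ \Sigma^+(t)\ \text{lies in the closed region bounded by}\ \Sigma^+\cup P\bigr\}.
\]
The standard Alexandrov continuation argument then shows that at $t=t^{*}$ the reflected cap $\Sigma^+(t^{*})$ must touch $\Sigma^+$ somewhere, and there are essentially two possibilities. The first is an interior contact (or, equivalently, a horizontal tangency on $P(t^{*})$): by the Hopf maximum principle guaranteed in Definition \ref{d7}, $\Sigma^+$ would be symmetric with respect to $P(t^{*})$, but then the reflection of $\partial\Sigma^+\subset P(0)$ would also be a boundary component lying in $P(2t^{*})$, forcing $t^{*}=0$. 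The second possibility is that $\Sigma^+(t^{*})$ first meets $\partial\Sigma^+\subset P(0)$, which means that some point of $\Sigma^+$ has height at least $2t^{*}$.

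In the first case $t^{*}=0$ and the whole $\Sigma^+$ is a graph over a domain in $P$, so Theorem \ref{teoremaco} gives $\max_{\Sigma^+} z\leq 4R_{\mathcal{A}}\leq 8R_{\mathcal{A}}$. In the second case, the portion $\Sigma^+\cap\{z\geq t^{*}\}$ is an embedded graph with boundary in the plane $P(t^{*})$, and it belongs to $\mathcal{A}$ by condition (2) of Definition \ref{d7}; Theorem \ref{teoremaco} then yields $\max_{\Sigma^+} z - t^{*}\leq 4R_{\mathcal{A}}$. Combining with $\max_{\Sigma^+} z\geq 2t^{*}$ gives $t^{*}\leq 4R_{\mathcal{A}}$ and hence $\max_{\Sigma^+} z\leq 8R_{\mathcal{A}}$, completing the reduction. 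The main subtlety, and what I expect to be the delicate step to justify carefully, is the dichotomy at $t=t^{*}$: one must verify that the Alexandrov moving-plane procedure terminates either by an interior/tangential first contact (handled by the Hopf maximum principle together with the location of $\partial\Sigma^+$) or by the reflected cap reaching the original boundary plane, and that in the surviving case $\Sigma^+\cap\{z\geq t^{*}\}$ is genuinely a graph to which Theorem \ref{teoremaco} applies.
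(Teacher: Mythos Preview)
Your proposal is correct and follows precisely the approach indicated by the paper, which simply states that the corollary ``follows from Theorem \ref{teoremaco} as a standard consequence of the Alexandrov reflection principle for planes parallel to $P$''; you have merely spelled out this standard moving-plane argument in detail. The dichotomy you identify (interior/tangential first contact versus the reflected cap reaching the boundary plane) and the arithmetic $h-t^{*}\leq 4R_{\mathcal A}$, $h\geq 2t^{*}\Rightarrow h\leq 8R_{\mathcal A}$ are exactly the intended reduction to Theorem \ref{teoremaco}.
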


This result follows from Theorem \ref{teoremaco} as a
standard consequence of the Alexandrov reflection principle for
planes parallel to $P$.

\begin{observacion}
The techniques used in Theorem \ref{teoremaco} and Corollary
\ref{corolariaco} are valid not only in $\r^3$, but also more
generally for hypersurfaces in $\r ^n$. Even more, they can easily
be adapted to study hypersurfaces in $\h ^n$.
\end{observacion}

The existence of a maximum principle and height estimates with respect to
planes for a family of surfaces ${\cal A}$, allow us to extend the
theory developed by Korevaar, Kusner, Meeks and Solomon
\cite{KKMS,KKS,M} for constant mean curvature surfaces in
$\r^3$ and $\h^3$ to our family ${\cal A}$. On the other hand, in
\cite{RoS} Rosenberg and Sa Earp showed that those techniques are
also suitable to study some families of surfaces satisfying a relationship of the type $H=f(H^2-K)$. However,
they do not use that the surfaces satisfy $H=f(H^2-K)$ actually,
but only that they satisfy the Hopf maximum principle and there
exist height estimates for them. Thus, following \cite{RoS} we get

\begin{teorema}[{\bf Cylindrical bounds}]\label{cilindrico}
Let ${\cal A}$ be a family of surfaces in $\r^3$ satisfying the Hopf
maximum principle. Let us take $\Sigma\in{\cal A}$ an annulus, i.e.
$\Si$ homeomorphic to a punctured closed disc of $\r^2$. If $\Si$ is
properly embedded, then it is contained in a half-cylinder of
$\r^3$.
\end{teorema}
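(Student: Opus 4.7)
The argument adapts to our abstract setting the proof of Rosenberg and Sa Earp \cite{RoS}, which is itself a transposition of the classical Korevaar-Kusner-Meeks-Solomon theorem for properly embedded annular ends of constant mean curvature \cite{KKMS, KKS, M}. As emphasized in \cite{RoS}, that proof uses only two structural ingredients: a maximum principle for surfaces in the class, and height estimates with respect to planes. In our framework both are at our disposal: the Hopf maximum principle is built into the definition of $\mathcal{A}$, and the height estimate is provided by Corollary \ref{corolariaco}, which bounds any embedded compact $\Sigma'\in\mathcal{A}$ with $\partial\Sigma'$ contained in a plane by $8R_{\mathcal{A}}$ on either side. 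Hence the plan is to carry out the KKMS--Rosenberg--Sa Earp scheme in our context.

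First I would remove from $\Sigma$ a compact piece containing $\partial\Sigma$ to isolate a properly embedded annular end $E\cong\s^1\times[0,\infty)$. By properness, $E$ leaves every compact subset of $\r^3$. I would choose a sequence of simple closed curves $\gamma_n\subset E$ separating $E$ into a compact piece and a non-compact piece, with $\gamma_n$ escaping to infinity. After passing to a subsequence, the unit vectors pointing from the origin to the centroid of $\gamma_n$ converge to some $v\in\s^2$, the candidate axis direction. Using the invariance of $\mathcal{A}$ under isometries of $\r^3$ (property~1 of Definition \ref{d7}), I can assume $v=e_3$, and denote the horizontal planes by $P_t=\{x_3=t\}$.

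The central step is cylindrical boundedness in the horizontal directions. I would establish it by Alexandrov moving planes: for each horizontal unit vector $w$, consider the vertical planes $\Pi_s=\{\langle x,w\rangle=s\}$ and slide $s$ from $+\infty$ downward. Initially $\Pi_s$ is disjoint from $E$, and the reflection of $E$ across $\Pi_s$ lies on the far side. At the first value of $s$ at which the reflected piece of $E$ becomes tangent to the remaining piece of $E$ (either in the interior or along $\gamma_0=\partial E$), the Hopf maximum principle forces $\Pi_s$ to be a plane of symmetry for $E$. Running this for every horizontal direction $w$ and using that a properly embedded annular end cannot possess too many planes of symmetry, one concludes that the process must stop with $\Pi_s$ at bounded distance from the $e_3$-axis, which gives a uniform horizontal bound for $E$.

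The delicate point, and the main obstacle, will be to guarantee that this moving-plane process produces a genuine first contact in finite time rather than merely an asymptotic tangency at infinity. This is precisely where the height estimate of Corollary \ref{corolariaco} intervenes, exactly as in \cite{RoS}: the bound $8R_{\mathcal{A}}$ on the height of compact pieces of surfaces in $\mathcal{A}$ with boundary in a plane prevents the reflected piece of $E$ from escaping to infinity without first touching $E$, since any such asymptotic contact would force the existence of arbitrarily tall compact caps of $E$ above a horizontal plane, contradicting the height estimate. Once horizontal boundedness is secured, properness combined with the choice of the asymptotic direction $v$ forces $E\subset\{x_3\geq h_0\}$ for some $h_0$, and the horizontal bound then locates $E$ inside a solid half-cylinder parallel to $v$, which is the desired conclusion.
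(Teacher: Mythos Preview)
The paper does not actually prove this theorem: it simply states, in the paragraph preceding the statement, that the Korevaar--Kusner--Meeks--Solomon theory extends to any family $\mathcal{A}$ satisfying the Hopf maximum principle and admitting height estimates, and then cites \cite{RoS}. Your top-level strategy---observe that only these two ingredients are used in \cite{RoS,KKMS,KKS,M} and that both are available here---is therefore exactly the paper's treatment, and at that level your proposal is correct and matches the paper.

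Where your proposal goes beyond the paper is in sketching the actual moving-plane argument, and that sketch has a genuine gap. Your Alexandrov step with vertical planes $\Pi_s$ concludes that at a first contact the maximum principle forces $\Pi_s$ to be a plane of symmetry of $E$, and then you invoke ``a properly embedded annular end cannot possess too many planes of symmetry'' to bound the stopping value of $s$. This reasoning is not right: an annular end \emph{can} have a vertical plane of symmetry in every horizontal direction (a rotational end does), so the symmetry conclusion gives no contradiction and no bound by itself. More importantly, for a noncompact end the reflected piece need not touch $E$ at all as $s$ decreases---the process can run forever with the reflection staying strictly on one side---and your proposed remedy, that an asymptotic tangency would produce ``arbitrarily tall compact caps of $E$ above a horizontal plane'', does not follow: the height estimate controls caps whose \emph{boundary} lies in a plane, and nothing in your horizontal-reflection setup produces such caps. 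In the actual KKS/RoS argument the height estimate is used first, with planes \emph{orthogonal} to the axial direction, to prove a plane-separation lemma showing the end eventually lies on one side of any such plane and hence has a well-defined axial vector; only then are tilted Alexandrov reflections (not purely vertical ones) used to obtain the cylindrical bound. If you want to flesh out the sketch rather than just cite \cite{RoS}, that is the order in which the two ingredients must be deployed.
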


A unitary vector $v\in\s^2$ is said to be an axial vector for
$\Si\subseteq\r^3$ if there exists a sequence of points $p_n\in\Si$
such that $|p_n|\rightarrow\infty$ and $p_n/|p_n|\rightarrow v$. In
particular, the theorem above asserts that for any properly embedded
annulus there exists a unique axial vector. In addition, this vector
is the generator of the rulings of the cylinder.

Finally, following \cite{RoS} for properly embedded complete
surfaces, we have

\begin{teorema}\label{finales}
Let ${\cal A}$ be a family of surfaces in $\r^3$ satisfying the Hopf
maximum principle. If $\Sigma\in{\cal A}$ is a properly embedded
surface with finite topology in $\r ^3$, then every end of $\Si$ is
cylindrically bounded. Moreover, if $a_1 , \ldots , a_k$ are the $k$
axial vectors corresponding to the ends, then these vectors cannot
be contained in an open hemisphere of $\s ^2$. In particular,
\begin{itemize}
\item $k =1$ is impossible.
\item If $k =2$, then $\Sigma$ is contained in a cylinder and is a rotational surface with respect to
a line parallel to the axis of the cylinder.
\item If $k=3$, then $\Sigma$ is contained in a slab.
\end{itemize}
\end{teorema}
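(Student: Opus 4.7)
The plan is to extract each end of $\Sigma$ as a cylindrically bounded annulus using Theorem \ref{cilindrico}, then apply the Alexandrov moving-plane method to rule out axial vectors confined to an open hemisphere, and finally read off the three particular consequences from elementary convex geometry on $\s^2$.

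\textbf{Step 1 (End structure).} Since $\Si$ has finite topology, it is homeomorphic to a compact surface minus finitely many points. Choose a compact exhaustion so that outside a compact core, $\Si$ splits as a disjoint union of $k$ properly embedded annular ends $E_1,\dots,E_k$. By item 2 of Definition \ref{d7}, each $E_i$ belongs to $\mathcal{A}$, and Theorem \ref{cilindrico} implies $E_i$ is contained in a half-cylinder. In particular, each $E_i$ has a well-defined axial vector $a_i\in\s^2$, the unit generator of the cylinder's rulings.

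\textbf{Step 2 (No open hemisphere).} Assume, for contradiction, that there exists $v\in\s^2$ with $\meta{a_i}{v}>0$ for every $i$. The cylindrical bounds then force $f(p):=\meta{p}{v}$ to be proper and bounded below on $\Si$, attaining a minimum value $t_0$. I would run Alexandrov reflection through the foliation $P(t)=f^{-1}(t)$, $t\geq t_0$: denote by $\Si^-(t)$ the part of $\Si$ below $P(t)$ and by $\Si^{-*}(t)$ its reflection in $P(t)$. For $t$ only slightly above $t_0$, the cap $\Si^-(t)$ is small, $\Si^{-*}(t)\subset\mathcal{A}$ lies above $P(t)$, and is disjoint from $\Si^+(t):=\Si\cap\{f\geq t\}$. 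Increasing $t$, the cylindrical confinement of each end together with its escape in a direction with $\meta{a_i}{v}>0$ prevents $\Si^{-*}(t)$ from remaining strictly separated from $\Si^+(t)$ indefinitely. At a first contact value $t_1$, either an interior tangency or a boundary tangency along $P(t_1)$ occurs; item 4 of Definition \ref{d7} (Hopf maximum principle) then forces $\Si^{-*}(t_1)\subseteq\Si^+(t_1)$, so $\Si$ is invariant under reflection in $P(t_1)$. But then each end $E_i$ would have a reflected companion end with axial vector $a_i-2\meta{a_i}{v}v$, whose $v$-component is $-\meta{a_i}{v}<0$, contradicting the assumption that all axial vectors lie in the open hemisphere about $v$.

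\textbf{Step 3 (The three cases).} The condition that $a_1,\dots,a_k$ are not contained in any open hemisphere of $\s^2$ is equivalent to $0\in\mathrm{conv}\{a_1,\dots,a_k\}$. For $k=1$ this is impossible, since $\{a_1\}$ lies in the open hemisphere centered at $a_1$. For $k=2$ it forces $a_2=-a_1$, so both ends are captured by cylinders aligned with $\mathbb{R}a_1$, and the whole $\Si$ lies in a cylinder; applying the same Alexandrov argument of Step 2 now with any direction $w\perp a_1$ yields a plane of symmetry containing $a_1$ for each such $w$, and the composition of two such reflections is a non-trivial rotation about a line parallel to $a_1$ under which $\Si$ is invariant, so $\Si$ is rotational about a line parallel to the axis of the cylinder. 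For $k=3$, the relation $0\in\mathrm{conv}\{a_1,a_2,a_3\}$ forces linear dependence of the $a_i$, hence the existence of a unit vector $n$ perpendicular to all three; the cylindrical bounds give $\meta{\cdot}{n}$ uniformly bounded on each end, and together with the bounded core this shows $\Si$ is contained in a slab orthogonal to $n$.

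\textbf{Main obstacle.} The routine part is Step 3 (convex geometry) and the extraction of ends in Step 1. The genuine work is in Step 2: justifying that the reflected caps $\Si^{-*}(t)$ must meet $\Si^+(t)$ at some finite $t_1$, rather than remaining strictly separated for all $t$. This requires combining properness of $\Si$, the quantitative cylindrical confinement from Theorem \ref{cilindrico}, and the finiteness of the set of ends to guarantee a first contact; only this first contact triggers the Hopf maximum principle and produces the symmetry that yields the desired contradiction.
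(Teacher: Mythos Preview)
The paper does not give its own detailed proof of Theorem \ref{finales}; it states that the result follows ``following \cite{RoS}'' once one has the Hopf maximum principle together with the height estimate of Corollary \ref{corolariaco}. So your outline should be measured against the Korevaar--Kusner--Meeks--Solomon / Rosenberg--Sa Earp scheme that the paper is invoking, and in particular against the fact that the paper deliberately establishes Corollary \ref{corolariaco} \emph{before} stating the theorem.

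Your Steps 1 and 3 are essentially the standard route and are fine (with the minor caveat below). The real issue is Step 2, and you have correctly flagged it. You want the reflected caps $\Si^{-*}(t)$ to meet $\Si^+(t)$ at some finite $t_1$, producing a global plane of symmetry whose induced reflection sends some end into the wrong hemisphere. But you give no argument that such a first contact must occur: depending on which complementary component of $\Si$ one reflects into, the reflected caps may have unbounded room and never touch $\Si^+(t)$, and even at a genuine contact one still owes a connectedness argument to promote the local coincidence to a global symmetry. The approach the paper is pointing to avoids this completely. If every $a_i$ satisfied $\meta{a_i}{v}>0$, then $\meta{\cdot}{v}$ would be bounded below on $\Si$, say by $t_0$, and for each $t$ the piece $\Si\cap\{\meta{\cdot}{v}\leq t\}$ would be a compact embedded surface of $\mathcal{A}$ with boundary in $P(t)$ (compactness follows from properness plus the cylindrical bounds on the ends). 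Corollary \ref{corolariaco} then forces $t-t_0\leq 8R_{\mathcal A}$ for every $t$, contradicting the unboundedness of $\Si$ in the $v$-direction. This is precisely why the paper stresses that the height estimate, not just the maximum principle, is the input needed.

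For $k=2$ in Step 3 the Alexandrov reflection \emph{is} the correct tool, and there the first-contact difficulty genuinely disappears: once $a_2=-a_1$ the surface lies in a solid cylinder with axis $a_1$, so $\meta{\cdot}{w}$ is bounded on $\Si$ for every $w\perp a_1$ and the moving-plane procedure must terminate. Your sketch should, however, explain why the resulting symmetry planes share a common line parallel to $a_1$; composing two reflections gives a single rotation, but to obtain full rotational symmetry one needs either uniqueness of the Alexandrov plane in each direction $w$ (which follows since $\Si$ is bounded in the $w$-direction) or the observation that the smallest enclosing cylinder is preserved by every symmetry, forcing all the planes through its axis.
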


\begin{definicion}\label{d8}
Let $(I,II)$ be a Codazzi pair on a surface $\Sigma$. We will say
that the pair is special Weingarten of elliptic type if its mean and
extrinsic curvatures $H$ and $K$ satisfy that $H=f(H^2-K)$, where
$f$ is a differentiable function defined on $[0,a)$,
$0<a\leq\infty$, such that $$4tf'(t)^2<1$$ for all $t\in[0,a)$.
\end{definicion}

It was proved by Rosenberg and Sa Earp \cite{RoS} (see also \cite{BSE}) that the set of
Weingarten surfaces of elliptic type in $\r^3$ and $\h^3$ with
$f(0)\neq 0$ is a family satisfying the Hopf maximum principle.
Thus, the above theorems are true for this kind of surfaces.
Actually these results were also proved in \cite{RoS}, although
under the additional hypothesis $f'(t)(1-2f(t)f'(t))\geq 0$.

The special Weingarten surfaces in $\r^3$ and $\h^3$ satisfying
$H=f(H^2-K)$ have been widely studied. In particular, an exhaustive
study of the rotational surfaces was developed by Sa Earp and
Toubiana \cite{ST5,ST2,ST3,ST4}.

In \cite{ST3} was posed the question of classifying the surfaces
satisfying $H=f(H^2-K)$ whose extrinsic curvature does not change
signs. More specifically, it is asked if such surfaces are totally
umbilical spheres, cylinders or surfaces of minimal type (i.e. with
$f(0)=0$). Observe that this fact is known for surfaces with
constant mean curvature. In fact, a minimal surface has non-positive
extrinsic curvature at every point and a complete surface with non
zero constant mean curvature and whose extrinsic curvature does not
change signs, must be a sphere or a cylinder \cite{Hof,KO}.

Next, and as a consequence of the study developed for Codazzi pairs,
we study that problem for the general case of special Weingarten
surfaces of elliptic type.

\begin{teorema}\label{otroth}
Let $\Sigma$ be a special Weingarten surface of elliptic type in
$\r^3$ satisfying that $H=f(H^2-K)$. Let us suppose that its
extrinsic curvature does not change signs:
\begin{enumerate}
\item If $\Sigma$ is complete and $K\geq 0$ at every point, then $\Sigma$ is a totally umbilical sphere,
a plane or a right circular cylinder.
\item If $\Sigma$ is properly embedded and $K\leq 0$ at every point, then $\Sigma$ is either a
right circular cylinder or a surface of minimal type (i.e.
$f(0)=0$).
\end{enumerate}
\end{teorema}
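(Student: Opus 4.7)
The plan is to treat the two parts separately, combining Theorem \ref{elteorema} with classical structure theorems for complete surfaces in $\r^3$ (Hartman--Nirenberg and van Heijenoort), the height estimates of Corollary \ref{corolariaco}, and, for Part 2, the Hopf maximum principle. Two preliminary facts will be used repeatedly. First, for $W(H,K)=H-f(H^2-K)$, at the umbilical locus $H^2=K$ a direct computation gives
\begin{align*}
W_x(t,t^2)+2t\,W_y(t,t^2)=1-2t\,f'(0)+2t\,f'(0)=1,
\end{align*}
so Theorem \ref{elteorema} applies: every topological sphere carrying a Codazzi pair with $H=f(H^2-K)$ is totally umbilical. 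Second, the scalar equation $h=f(h^2)$ has at most one real root, since $g(h):=h-f(h^2)$ satisfies $g'(h)=1-2h\,f'(h^2)>0$ by the ellipticity $4h^2 f'(h^2)^2<1$.

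For Part (1), assume $\Sigma$ is complete with $K\geq 0$, and split into compact and non-compact cases. If $\Sigma$ is compact, Gauss--Bonnet forces $\chi(\Sigma)\geq 0$; since no compact flat surface embeds in $\r^3$, $\Sigma$ is a topological sphere and hence, by the first preliminary fact and Theorem \ref{elteorema}, a round sphere. If $\Sigma$ is non-compact, then by Hartman--Nirenberg (when $K\equiv 0$) or van Heijenoort (when $K$ is strictly positive somewhere) it is either a generalized cylinder $\gamma\times\r$ or the boundary of a non-compact convex body. In the cylinder subcase the Weingarten equation reads $\kappa_\gamma/2=f(\kappa_\gamma^2/4)$, and the second preliminary fact forces $\kappa_\gamma$ to be constant, yielding a plane or a right circular cylinder. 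The non-compact convex subcase is ruled out by applying Corollary \ref{corolariaco} to the compact pieces of $\Sigma$ cut out by horizontal planes of arbitrarily large heights: the bound $8R_\mathcal{A}$ is violated as soon as $\Sigma$ rises above $8R_\mathcal{A}$ from its lowest point.

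For Part (2), assume $\Sigma$ is properly embedded with $K\leq 0$. If $f(0)=0$ there is nothing to prove. If $f(0)\neq 0$, by \cite{RoS} the family $\mathcal{A}$ satisfies the Hopf maximum principle, so all results of the previous section apply. When $K\equiv 0$, Hartman--Nirenberg leaves a plane (impossible, since $H\equiv 0$ would force $f(0)=0$) or a generalized cylinder, which by the second preliminary fact must be right circular with $\kappa=2f(0)$. The remaining subcase is $K<0$ on some open set, which must be shown impossible. The proposed argument is to compare $\Sigma$ with the unique round sphere $S\in\mathcal{A}$ (with $K(S)=1/R_\mathcal{A}^2>0$) via an Alexandrov moving-sphere process: moving a family of translates of $S$ from infinity, one seeks a first interior point of tangency with coinciding unit normals, at which the Hopf maximum principle forces $\Sigma=S$ locally, contradicting $K\leq 0<K(S)$.

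The main obstacle is securing this interior contact without an a priori finite-topology hypothesis. Under finite topology, Theorem \ref{finales} and the Sa Earp--Toubiana classification of rotational special Weingarten surfaces close the argument: each cylindrical end gives rotational symmetry via Theorem \ref{cilindrico}, and a rotational surface with $K\leq 0$, $H=f(H^2-K)$, and $f(0)\neq 0$ can only be a right circular cylinder. Without that assumption one must directly control the asymptotic behaviour of $\Sigma$, using the height estimates of Corollary \ref{corolariaco} and the cylindrical bounds of Theorem \ref{cilindrico} to guarantee that the moving-sphere argument delivers an interior, rather than merely asymptotic, contact point. This interplay between the saddle-shaped region $\{K<0\}$ and the external convex geometry enforced by the spheres of $\mathcal{A}$ is the delicate heart of the proof.
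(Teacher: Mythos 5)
Your Part (1) is essentially sound and close in spirit to the paper's argument: the $K\equiv 0$ case via Hartman--Nirenberg plus the uniqueness of the root of $h=f(h^2)$, the compact case via Theorem \ref{elteorema}, and the non-compact convex case (van Heijenoort) ruled out by height estimates. The paper disposes of that last case with the $k=1$ clause of Theorem \ref{finales} rather than by slicing, but your slicing argument can be made to work provided you (i) justify that $f(0)\neq 0$ once $K>0$ somewhere, since otherwise the family contains no compact example, does not satisfy the Hopf maximum principle, and $R_{\mathcal{A}}$ is undefined; and (ii) choose the slicing direction strictly positive on the recession cone of the convex body so that the slices are in fact compact --- ``horizontal'' planes need not do this.

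Part (2) contains a genuine gap, and it is precisely the point you flag as ``the delicate heart of the proof.'' The missing idea is the paper's Lemma \ref{lematecnico}. From $K\leq 0$ and $f(0)\neq 0$ one gets $f(s)^2-s>0$ on $[0,s_0]$, hence $H^2-K\geq s_0>0$ on all of $\Sigma$; then the flat metric $g_0=2|Q|=\sqrt{H^2-K}\,A$ is well defined, and the ellipticity bound $4tf'(t)^2<1$ integrates to $|\varphi(s)|\leq|\log s|+c_1$, giving $\cosh\varphi(t)\leq c_2\,t$ and therefore $I\leq 2c_2\,g_0$, so $g_0$ is a complete flat metric. Consequently $\Sigma$ is conformally, hence topologically, the plane, the once punctured plane, or a torus. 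This is exactly what produces the finite topology --- indeed $k\leq 2$ ends --- that you could not obtain: Theorem \ref{finales} excludes the plane, compactness of a torus in $\r^3$ forces a point of positive curvature, and the punctured-plane case is rotational and contained in a cylinder, where the generatrix, being convex and trapped in a strip, must be a straight line. Your proposed substitute, the moving-sphere comparison, does not close the gap: at a first exterior tangency of $\Sigma$ with a translate of the round sphere the principal curvatures of $\Sigma$ are only bounded below by $-1/R_{\mathcal{A}}$, which is perfectly compatible with $K\leq 0$, and the maximum principle requires coinciding normals and a one-sided ordering that you have not secured for a possibly infinite-topology surface. Even granting finite topology, your sketch does not exclude $k\geq 3$ ends, whereas the conformal-type argument does so automatically.
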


In order to prove this theorem, we will first establish the
following general Lemma for Codazzi pairs.

\begin{lema}\label{lematecnico}
Let $(I,II)$ be a special Weingarten pair of elliptic type on a
surface $\Sigma$, with mean and extrinsic curvatures $H$ and $K$
respectively. If $H^2-K\neq0$ on $\Sigma$, then the new metric
$$
g_0=\sqrt{H^2-K}\ A
$$
is a flat metric on $\Sigma$. Here, $A$ is the metric given by
(\ref{AB}) for the function $\varphi$ defined in (\ref{varfi}).

Moreover, if $I$ is complete and $H^2-K\geq c_0>0$ then the metric
$g_0$ is complete. In particular, $\Sigma$ with the conformal
structure given by $A$ (or by $g_0$) is the complex plane, the
once punctured complex plane or a torus.
\end{lema}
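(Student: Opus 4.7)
The strategy is to apply Corollary \ref{corolario3} to $(I,II)$, so that $g_0$ becomes conformally equivalent to the flat metric $2|Q|\,|dz|^2$ associated to a holomorphic quadratic differential, and then to preserve completeness by comparing $g_0$ with $I$.

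I would begin by verifying the hypothesis $dH-\sqrt{H^2-K}\,d\varphi=0$ of Corollary \ref{corolario3}. Writing $t=\sqrt{H^2-K}$, the Weingarten relation $H=f(t^2)$ gives $dH=2tf'(t^2)\,dt$, while the choice \eqref{varfi} gives $d\varphi=2f'(t^2)\,dt$, so $\sqrt{H^2-K}\,d\varphi=dH$. The required differentiable extension of $\sinh\varphi/\sqrt{H^2-K}$ is automatic because $H^2-K\neq 0$ on $\Sigma$. Corollary \ref{corolario3} then produces a Codazzi pair $(A,B)$ with $H(A,B)=0$, $K(A,B)=-(H^2-K)$, and with holomorphic Hopf differential for the conformal structure of $A$.

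For the flatness, I would work in a local conformal parameter $z$ for $A$. Writing $A=2\lambda|dz|^2$ and $B=Q\,dz^2+\overline{Q}\,d\bar z^2$, the formula \eqref{modulodeq} applied to $(A,B)$ gives $|Q|/\lambda=\sqrt{H^2-K}$, hence
\[
g_0=\sqrt{H^2-K}\,A=2|Q|\,|dz|^2,
\]
with $Q$ holomorphic and nowhere vanishing on $\Sigma$. The Gaussian curvature of a conformal metric $e^{2u}|dz|^2$ is $-e^{-2u}\Delta u$ with $\Delta=4\partial_z\partial_{\bar z}$; here $2u=\log 2+\tfrac{1}{2}\log(Q\overline{Q})$ is the sum of a constant, a holomorphic function and an antiholomorphic function, so $\Delta u=0$ and $g_0$ is flat.

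For completeness I would compare $g_0$ with $I$. Since $II-HI$ has $I$-eigenvalues $\pm\sqrt{H^2-K}$, the metric $A=\cosh\varphi\,I+\sinh\varphi\,(II-HI)/\sqrt{H^2-K}$ has $I$-eigenvalues $\cosh\varphi\pm\sinh\varphi=e^{\pm\varphi}$, yielding the pointwise bound $A\geq e^{-|\varphi|}\,I$. The ellipticity $4tf'(t)^2<1$ reads $|\varphi'(s)|=|2f'(s^2)|<1/s$ for $s>0$, so that $|\varphi(t)|\leq|\varphi(1)|+|\log t|$ whenever $t$ stays bounded away from zero; combined with $t=\sqrt{H^2-K}\geq\sqrt{c_0}$ this gives
\[
g_0=tA\geq t\,e^{-|\varphi(t)|}\,I\geq C\,I
\]
for some constant $C=C(c_0,f)>0$, so completeness of $I$ transfers to $g_0$. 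Being a complete oriented flat Riemannian surface, $(\Sigma,g_0)$ is isometric to a quotient $\r^2/\Gamma$ by a group of translations (free action forbids nontrivial rotations), and the underlying Riemann surface, which is the same as the conformal class of $A$, is one of $\c$, $\c-\{0\}$ or a torus. The main obstacle is precisely this completeness step: without the logarithmic control on $\varphi$ provided by ellipticity, the passage from completeness of $I$ to completeness of $g_0$ would fail, and this is where the hypothesis $4tf'(t)^2<1$ is used essentially.
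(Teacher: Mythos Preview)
Your proof is correct and follows essentially the same route as the paper's: obtain the holomorphic $Q$ for $(A,B)$, identify $g_0=2|Q|$ as flat, and use the ellipticity bound $|\varphi'(s)|<1/s$ to compare $g_0$ with $I$ and transfer completeness. The only cosmetic differences are that the paper invokes Corollary~\ref{corolario4} directly (rather than re-verifying the hypothesis of Corollary~\ref{corolario3}) and phrases the comparison as $I\leq 2\cosh\varphi(t)\,A$ from \eqref{III} instead of your sharper eigenvalue bound $A\geq e^{-|\varphi|}I$; both lead to the same conclusion $g_0\geq C\,I$.
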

\begin{proof}
From Corollary \ref{corolario4} we get that $2|Q|=t\,A$, where
$t=\sqrt{H^2-K}$ and $Q$ is a holomorphic quadratic form for $A$. Thus,
since $H^2-K>0$, $g_0=2|Q|$ is a well-defined flat metric  on
$\Sigma$.

Let us see that $g_0$ is complete if $I$ is complete. In
such a case $g_0$ would be a complete flat metric and, so, the universal Riemannian covering of $\Sigma$ for the
metric $g_0$ would be the Euclidean plane. Hence,
$\Sigma$ would be conformally equivalent to the complex plane, to the once punctured
complex plane or to a torus.

Observe that, from (\ref{III}), we get that
\begin{equation}\label{ladesigualdad}
I\leq 2\cosh\varphi(t)\,A.
\end{equation}
On the other hand, since $(I,II)$ is a special Weingarten pair of elliptic type it
follows that $4\,s^2\,f'(s^2)^2<1$ and so, from (\ref{varfi}),
$$
s^2\,\varphi'(s)^2<1,\quad {\rm or\ equivalently}\quad
-\frac{1}{s}<\varphi'(s)<\frac{1}{s}.
$$
Hence, by integrating between a fixed point $s_0>0$ and $s$ one gets
that there exists a constant $c_1>0$ such that $|\varphi(s)|\leq
|\log s|+c_1$. Therefore, since
$$
\lim_{s\rightarrow\infty}\frac{\cosh\log(s)}{s}=\frac{1}{2}
$$
and $t\geq\sqrt{c_0}$, we deduce the existence of a constant $c_2>0$
such that $\cosh\varphi(t)\leq c_2\,t$.

Finally, from (\ref{ladesigualdad}) it follows that
$$
I\,\leq \,2c_2\,t\,A\,=\,2c_2\,g_0,
$$
that is, $g_0$ is complete.
\end{proof}
\noindent {\it Proof of Theorem \ref{otroth}:} Firstly, let us
suppose that $\Sigma$ is a complete surface in $\r^3$ with $K\geq0$.

If $K$ vanishes identically, then it is easy to conclude that
$\Sigma$ is either a plane or a right circular cylinder (see
\cite{ST5}).

If there exists a point where the extrinsic curvature is positive,
then either $\Sigma$ is homeomorphic to a sphere or it is properly
embedded and homeomorphic to the plane \cite{Wu}. In addition, we have $f(0)\neq 0$ (see
\cite{ST5}).

In the first case
$\Sigma$ must be a totally umbilical sphere from Theorem
\ref{elteorema}. The second case is not possible from Theorem
\ref{finales} applied to our family of special Weingarten surfaces
with $H=f(H^2-K)$.

Now, let us suppose that $\Sigma$ is properly embedded and $K\leq0$.
Then we have that
$$
0\geq K=H^2-(H^2-K)=f(H^2-K)^2-(H^2-K).
$$
Hence, if $f(0)\neq0$, since the function $f(s)^2-s$ is continuous
for $s\geq0$ and takes a positive value at $s=0$, then there exists
$s_0>0$ such that $f(s)^2-s>0$ for $s\in[0,s_0]$. Consequently
$H^2-K\geq s_0>0$ on $\Sigma$ since $K\leq 0$.

From Lemma \ref{lematecnico}, $\Sigma$ is homeomorphic to the
plane, to the once punctured plane or to a torus. Using once again
Theorem \ref{finales}, $\Sigma$ cannot be homeomorphic to a plane.
In addition, every compact surface in $\r^3$ must have a point with
positive extrinsic curvature, and so $\Sigma$ cannot be homeomorphic
to a torus. With all of this, $\Sigma$ must be homeomorphic to the
once punctured plane and so, from Theorem \ref{finales}, it must
be a rotational surface and must be contained in a cylinder $C$ of
$\r^3$.

To finish, let us see that $\Sigma$ is a right circular cylinder. In
fact, up to an isometry of $\r^3$, we can suppose that $\Sigma$ is a
rotational surface with respect to the $z$-axis. Let us denote by
$$\alpha=\Sigma\cap\{(x,y,z)\in\r^3:\ x>0,\ y=0\}$$ a generatrix curve of
$\Sigma$. It is clear that $\alpha$ is a line of curvature of
$\Sigma$ and its signed-curvature on the plane $y=0$ changes signs
if and only if $K$ changes signs.

Since $K\leq 0$, the sign of the
curvature of $\alpha$ on the plane $y=0$ does not change, and so
$\alpha$ is a convex curve. But, since $\alpha$ is contained in the
strip determined by the $z$-axis and the line parallel to
$C\cap\{(x,y,z)\in\r^3:\ x>0,\ y=0\}$, we conclude that $\alpha$
must be a line parallel to the $z$-axis, as we wanted to prove.
\hfill{\large $\Box$}

\footnotesize The first author is partially supported by Junta de Comunidades de
Castilla-La Mancha, Grant No PCI-08-0023. The second and third authors are partially
supported by Grupo de Excelencia P06-FQM-01642 Junta de Andalucía. The authors are
partially supported by MCYT-FEDER, Grant No MTM2007-65249

\begin{thebibliography}{9999999}\small

\bibitem{AEG1} J. A. Aledo, J. M. Espinar, J. A. Gálvez, Complete surfaces
of constant curvature in $\h^2 \times \r$ and $\s^2 \times \r$, {\it Calc.
Variations \& PDE's}, {\bf 29} (2007), 347--363.

\bibitem{AdCT} H. Alencar, M. do Carmo, R. Tribuzy, A theorem of H. Hopf and the
Cauchy-Riemann inequality, \emph{Comm. Anal. Geom.}, {\bf 15} (2007), 283--298.

\bibitem{Bi} I. Bivens, J. P. Bourguignon, A.  Derdzinski, D. Ferus, O. Kowalski, T. Klotz-Milnor, V. Oliker, U. Simon, W Str\"ubing, K. Voss, {\it
Discussion on Codazzi-tensors.} 243--299, Lecture Notes in
Math., 838, Springer, Berlin-New York, 1981.

\bibitem{BSE} F. Brito, R. Sa Earp, On the structure of certain Weingarten surfaces with
boundary a circle, {\it Ann. Fac. Sci. Toulouse Math.}, {\bf 6} (1997),
243--255.

\bibitem{Br} R. L. Bryant, Complex Analysis and Weingarten Surfaces, (1984),
{\it Non submitted}.

\bibitem{Ch} S. S. Chern, On special $W-$surfaces,
{\it Proc. Am. Math. Soc.}, {\bf 6} (1955), 783--786.

\bibitem{EGR} J. M. Espinar, J. A. Gálvez, H. Rosenberg, Complete surfaces with positive
extrinsic curvature in product spaces, {\it To appear in Comment. Math. Helv.}

\bibitem{Gr} V. C. Grove, On closed convex surfaces,
{\it Proc. Amer. Math. Soc.}, {\bf 8} (1957), 777--786.

\bibitem{HW} P. Hartman, W. Wintner, Umbilical points and $W-$surfaces, {\it Amer. J. Math.},
{\bf 76} (1954), 502--508.

\bibitem{Hof} D. Hoffman, Surfaces of constant mean curvature in manifolds of
constant curvature, {\it J. Diff. Geom.}, {\bf 8} (1977), 161--176.

\bibitem{Ho} H. Hopf,
Differential Geometry in the large, {\it Springer Verlag}, Berlín, 1983.

\bibitem{H} C. C. Hsiung, {\it A first course in Differential Geometry}, John Wiley \&
Sons, New York, 1981.

\bibitem{Jost} J. Jost, {\it Two-dimensional geometric variational problems}, John Wiley \&
Sons, Ltd., Chichester, 1991.

\bibitem{KO} T. Klotz, R. Osserman,
Complete surfaces in $\r ^3$ with constant mean curvature, {\it Comment. Math.
Helv.}, {\bf 41} (1966-67), 313--318.

\bibitem{KKMS} N. Korevaar, R. Kusner, W. Meeks, B. Solomon, Constant mean curvature surfaces
in hyperbolic space, {\it Amer. J. Math.}, {\bf 114} (1992), 1--43.

\bibitem{KKS} N. Korevaar, R. Kusner, B. Solomon, The structure of complete embedded
surfaces with constant mean curvature, {\it  J. Differ. Geom.}, {\bf 30} (1989),
465--503.

\bibitem{M} W. Meeks, The topology and geometry of embedded surfaces of constant
mean curvature, {\it  J. Differ. Geom.}, {\bf 27} (1988), 539--552.

\bibitem{Mi} T. K. Milnor,
Abstract Weingarten Surfaces, {\it J. Diff. Geom.}, {\bf 15} (1980), 365--380.

\bibitem{Ol} V. Oliker, U. Simon, Codazzi tensors and equations of Monge-Ampère type
on compact manifolds of constant sectional curvature, {\it  J. Reine Angew. Math.},
{\bf 342} (1983), 35--65.

\bibitem{RoS} H. Rosenberg, R. Sa Earp, The Geometry of properly embedded special surfaces
in $\r^3$; e. g., surfaces satisfying $a\,H +b\,K =1$, where $a$ and $b$ are
positive, {\it Duke Math. J.}, {\bf 73} (1994), 291--306.

\bibitem{ST5} R. Sa Earp, E. Toubiana, A note on special surfaces in $\r ^3$,
{\it Mat. Comtemp.}, {\bf 4} (1993), 108--118.

\bibitem{ST2} R. Sa Earp, E. Toubiana, Sur les surfaces de Weingarten spéciales de type
minimal, {\it Bol. Soc. Bras. Mat.}, {\bf 26} (1995), 129--148.

\bibitem{ST3} R. Sa Earp, E. Toubiana, Classification des surfaces de type Delaunay,
{\it Amer. J. Math.}, {\bf 121} (1999), 671--700.

\bibitem{ST4} R. Sa Earp, E. Toubiana, Symmetry of properly embedded special Weingarten
surfaces in $\h ^3$, {\it Trans. Am. Math. Soc.}, {\bf 351} (1999),
4693--4711.

\bibitem{Sp} M. Spivak, A comprehensive introduction to Differential Geometry,
{\it Publish or Perish}, 1979.

\bibitem{Wu} J. Van Heijenoort, On locally convex manifolds, {\it Comm. Pure Appl. Math.}, (1952), 223--242.

\end{thebibliography}
\end{document}